\newcommand{\BZ}{{\mathbb{Z}}}
\newcommand{\BN}{{\mathbb{N}}}
\newcommand{\BR}{{\mathbb{R}}}
\newcommand{\BC}{{\mathbb{C}}}
\newcommand{\BF}{{\mathbb{F}}}
\newcommand{\BE}{{\mathbb{E}}}
\newcommand{\gD}{\Delta}
\newcommand{\gd}{\delta}
\newcommand{\gC}{\Gamma}
\newcommand{\gl}{\lambda}
\newcommand{\ga}{\alpha}
\newcommand{\ti}[1]{\tilde{#1}}
\newtheorem{prop}{Proposition}[section]
\newtheorem{thm}[prop]{Theorem}
\newtheorem{lem}[prop]{Lemma}
\newtheorem{cor}[prop]{Corollary}
\theoremstyle{definition}
\newtheorem{defn}[prop]{Definition}
\newtheorem{rem}[prop]{Remark}
\newtheorem{exam}[prop]{Example}
\newtheorem{clm}[prop]{Claim}
\newcommand\Isom{\operatorname{Isom}}
\long\def\@savemarbox#1#2{\global\setbox#1\vtop{\hsize\marginparwidth
  \@parboxrestore\tiny\raggedright #2}}
\numberwithin{equation}{section}
\begin{document}
\author{Omer Lavy}

\thanks{}

\date{17.12.15}

\title{fixed point theorems for groups acting on non-positively curved manifolds }
\maketitle

\begin{abstract}
We study isometric actions of Steinberg groups on Hadamard manifolds. We prove some rigidity properties related to these actions. In particular we show that every isometric action of $St_n(\BF_p\langle t_1,\ldots ,t_k \rangle)$ on Hadamard manifold   when $n \geq 3$ factors through a finite quotient. We further study actions on infinite dimensional manifolds and prove a fixed point theorem related to such actions.
\end{abstract}

\section{Introduction}
We study isometric actions of non-commutative Steinberg groups on Hadamard manifolds. Hadamard manifolds are complete simply connected non-positively curved Riemaniann manifolds. Usually Hadamard manifolds are assumed to be of finite dimension. We also consider the infinite dimension case. Recall that while finite dimensional manifolds are metrically proper (i.e. closed balls are compact), infinite dimensional manifolds are not hence we will have different treatment for each case.

  It is a well known question of Gromov whether there exist groups with no fixed point free action on CAT(0) spaces. Gromov conjectured that random groups always have a fixed point (see Pansu \cite {P}.) A first step in this direction was done by Arzhantseva et al.. They introduced an example of an infinite group that admits no non-trivial isometric action on finite dimensional manifolds which are p-acyclic \cite {ABJLMS}. Next it was shown by Naor and Silberman \cite {NS} that indeed not only that random groups have fixed points when acting on CAT(0) spaces, but that this property can be extended to many p-convex metric spaces.

  We focus our attention on the higher rank Steinberg  groups, $St_n(R)$ when $n \geq 3$ and $R$ is either the associative ring $R=\BF_p\langle t_1,\ldots ,t_k \rangle$ (for some applications we require that $p \geq 5$) or the torsion free ring $R=\BZ \langle t_1, \ldots ,t_k\rangle$ (we use the $\langle \rangle$ sign to denote non-commutative polynomials). These groups are often called non-commutative universal lattices. Kassabov (and Shalom in the commutative case) coined the name as they surject on many lattices in higher rank Lie groups. It is for this reason that any fixed point property proved for them immediately applies to the corresponding lattices. Since lattices in p-adic analytic groups and in Lie groups do have fixed point free actions on CAT(0) spaces (their associated buildings and symmetric spaces for example) one can not hope to have such a strong result concerning their actions. We have therefore to assume more. 

 Our first goal is to study isometric actions of $St_n(\BF_p \langle t_1, \ldots , t_k \rangle)$ on (finite dimensional) Hadamard manifolds. We show that any isometric action of the groups $\gC=St_n(R)$   when $R=\BF_p \langle t_1, \ldots , t_k \rangle$ ($n \geq 3$) on a finite dimensional Hadamard manifold is finite.

\begin {thm} \label {main result}
Let $\gC=St_n(R)$   when $R=\BF_p \langle t_1, \ldots , t_k \rangle$. Then any isometric group action of $\gC$ on a finite dimensional Hadamard manifold $X$ is finite, i.e. it factors through a finite group (in particular $\gC$ has a fixed point in $X$.)
\end {thm}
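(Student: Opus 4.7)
The plan is to proceed in two stages: first produce a global fixed point of $\gC$ in $X$, then apply a Schur-type argument at that fixed point to conclude the image in $\Isom(X)$ is finite.

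For the first stage, I would use that $R=\BF_p\langle t_1,\ldots,t_k\rangle$ has characteristic $p$ to observe that every elementary matrix $e_{ij}(r)$ has $p$-power order in $\gC$, hence is an elliptic isometry of $X$ and fixes a point by Cartan's fixed point theorem. Each root subgroup $E_{ij}(R)=\{e_{ij}(r):r\in R\}\cong(R,+)$ is then abelian and $p$-torsion. The fixed set of any finitely generated subgroup of $E_{ij}(R)$ is a totally geodesic Hadamard submanifold of $X$, and a descending chain of such submanifolds must stabilize for dimension reasons in a finite-dimensional $X$, producing a nonempty common fixed submanifold $F_{ij}$. By induction on nilpotency class the same conclusion holds for any torsion nilpotent subgroup, so the upper- and lower-triangular subgroups $U^+,U^-\subset\gC$ each fix a totally geodesic Hadamard submanifold $F^+,F^-\subset X$.

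The main obstacle is to upgrade this to a global fixed point by showing $F^+\cap F^-\neq\emptyset$; since $U^+\cup U^-$ generates $\gC$, any common point will then be fixed by all of $\gC$. The difficulty is that $U^+$ and $U^-$ do not commute, so the clean nilpotent descent is no longer available. My approach would be to exploit the Steinberg commutator identity $[e_{ij}(r),e_{jk}(s)]=e_{ik}(rs)$ together with the abundance of Heisenberg-type torsion nilpotent subgroups $\langle E_{ij},E_{jk}\rangle$ present for $n\geq 3$, bridging $F^+$ and $F^-$ through intermediate common fixed submanifolds of such triples. An alternative route I would consider is a Caprace--Monod style dichotomy for isometric $\gC$-actions on CAT(0) spaces, in which the scenarios of a fixed point at infinity and of a preserved nontrivial flat are ruled out using, respectively, the perfection of $\gC$ and Kazhdan's property (T), known for $\gC$ in this range of parameters.

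Once a global fixed point $x\in X$ is in hand, the argument closes via the linear isotropy representation $\gC\to O(T_xX)\cong O(d)$. This representation is injective on the image of $\gC$ in $\Isom(X)$, because an isometry of a Riemannian manifold fixing $x$ with identity derivative at $x$ must be the identity. Since $R$ is finitely generated as a ring and $n$ is fixed, $\gC$ is finitely generated; as its generators are of $p$-power order, the image in $\GL_d(\BR)$ is a finitely generated torsion linear group, hence finite by Schur's theorem. Thus the action factors through a finite quotient, completing the proof. The hardest ingredient is, as noted, the intersection $F^+\cap F^-$, where the higher-rank hypothesis $n\geq 3$ and the specific structure of $R$ must be used decisively.
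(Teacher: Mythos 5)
Your proposal has two genuine gaps, one in each stage. In the first stage you correctly reduce to showing $F^+\cap F^-\neq\emptyset$, but you then explicitly leave this as ``the main obstacle'' and offer only unexecuted strategies. The bridging-via-Heisenberg-subgroups idea is not carried out, and the Caprace--Monod alternative does not close either: perfection of $\gC$ only kills the Busemann character attached to a fixed point at infinity, so it does not rule out fixing a point at infinity while preserving horoballs, and property (T) does not by itself exclude an unbounded action on a Hadamard manifold (indeed $\gC$ surjects onto $SL_n(\BZ)$-like lattices with unbounded actions in the torsion-free case, so any argument here must use the characteristic $p$ torsion decisively --- which your sketch does not). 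In the second stage, the Schur step is incorrect as stated: the image of $\gC$ in $O(d)$ is \emph{generated by} elements of $p$-power order, but that does not make it a torsion group, which is what Schur's theorem on finitely generated periodic linear groups requires. In fact $St_n(\BF_p\langle t_1,\ldots,t_k\rangle)$ is not torsion --- it surjects onto $EL_n(\BF_p[t])$, which contains elements of infinite order --- so you cannot conclude finiteness of the image this way even granting a global fixed point.

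The paper's proof sidesteps both issues by never seeking a global fixed point for all of $\gC$ first. It applies Cartan (plus induction on dimension) only to each root subgroup $H_{i,j}=\{x_{i,j}(r)\}\cong (R,+)$, which is abelian and $p$-torsion and hence fixes some $x\in X$. Since $X$ is Hadamard, the action of $H_{i,j}$ on all of $X$ is determined by the isotropy representation $\rho_{i,j}$ on the finite-dimensional space $T_x$; an abelian group of exponent $p$ in $O(d)$ is finite, so $U_{i,j}=\{r: x_{i,j}(r)\in\ker\rho_{i,j}\}$ has finite index in $R$ and acts trivially on $X$. The Steinberg commutator relations then show $U=\bigcap U_{i,j}$ is a finite-index two-sided ideal, $St_n(U)$ lies in the kernel of the action, and the Kassabov--Sapir theorem that $St_n(R/U)$ is finite for finite $R/U$ finishes the proof; the global fixed point is a \emph{corollary} of finiteness via Cartan, not an input. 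Note also that this route works for all $p$, whereas any route through the paper's fixed-point criterion (Theorem \ref{infinite fix point criterion}) requires $p\geq 5$ for the angle estimates. If you want to salvage your outline, replace the Schur step by the finite-index-ideal argument above; that also makes your entire first stage unnecessary.
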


\begin {rem}
Note that the (infinite dimensional) regular representation $\gC \rightarrow U(l^2(\gC))$ is a $\gC$ isometric action which is not finite.
\end {rem}

 When the dimension of $X$ is infinite it is not proper anymore and more delicate methods are needed. We restrict our treatment to pinched manifolds. These are manifolds whose sectional curvature is bounded from below as well. We show that this is enough to ensure that $\gC$ has a fixed point in $X$, provided that $p\ge 5$.
\begin {thm} \label {infinite dimension}
 Let $\gC$ be as above with $p \ge 5$. Let $X$ be an Hadamard manifold. If the sectional curvature of $X$ is bounded from below ($X$ can be of infinite dimension here) then $\gC$ has a global fixed point in $X$.
\end {thm}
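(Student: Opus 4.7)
The plan is to adapt the strategy of Theorem \ref{main result} to the non-proper infinite-dimensional setting, using the lower curvature bound as a substitute for local compactness. The underlying tool is the Bruhat--Tits/Cartan fixed-point theorem for complete CAT(0) spaces: any isometric action with a bounded orbit admits a unique circumcenter which is then fixed. Since every Hadamard manifold is complete CAT(0) regardless of dimension, the task reduces to producing a bounded $\gC$-orbit in $X$.

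Each elementary generator $e_{ij}(r)$ has order $p$, hence is elliptic with non-empty closed convex fixed-point set. More generally the root subgroup $U_{ij}=\{e_{ij}(r):r\in R\}$ is an abelian group of exponent $p$, i.e.\ an $\BF_p$-vector space, which is the directed union of its finite subgroups $F_\alpha$ with non-empty closed convex fixed sets $X^{F_\alpha}$. The first main step would be to show that each $U_{ij}$ admits a global fixed point: fix $x_0\in X$, take the circumcenter $c_\alpha\in X^{F_\alpha}$ of the finite orbit $F_\alpha\cdot x_0$, and show that the net $(c_\alpha)_\alpha$ converges. The pinched curvature hypothesis should enter precisely here, to give two-sided comparison control on Jacobi fields and yield a quantitative stability estimate for circumcenters under monotone enlargements of the acting group.

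Once each $U_{ij}$ has a fixed point, I would combine them using the Steinberg commutator relations $[e_{ij}(r),e_{jk}(s)]=e_{ik}(rs)$ together with the action of the finite subgroup $\mathrm{SL}_n(\BF_p)\subset\gC$, whose fixed-point set is automatically non-empty. Conjugation by monomial matrices in $\mathrm{SL}_n(\BF_p)$ permutes the root subgroups, so a single common fixed point of $\mathrm{SL}_n(\BF_p)$ and one $U_{ij}$ propagates to all root subgroups, yielding a point fixed by every elementary generator and hence by $\gC$. The hypothesis $p\geq 5$ presumably enters here to guarantee perfectness of $\mathrm{SL}_n(\BF_p)$ and the vanishing of its Schur multiplier, so that $\mathrm{SL}_n(\BF_p)$ embeds in $\gC$ as a genuine subgroup on which the combinatorial arguments are clean.

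The main obstacle is the second paragraph: in infinite dimension a filtered family of non-empty closed convex sets in a CAT(0) space can genuinely have empty intersection, with fixed sets escaping to infinity. Convergence of the circumcenter net $(c_\alpha)$ is not automatic and must lean essentially on the pinched curvature hypothesis, presumably through a uniform modulus of convexity of the distance function stronger than what a general CAT(0) structure provides. If this step goes through for a single root subgroup, then the remaining commutator bookkeeping and patching with $\mathrm{SL}_n(\BF_p)$ should follow by standard Steinberg-theoretic manipulations parallel to those of Theorem \ref{main result}.
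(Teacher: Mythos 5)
There is a genuine gap, and it sits exactly where you locate the ``main obstacle'': your first main step is not merely unproven but cannot be carried out from the hypotheses you invoke. A root subgroup $U_{ij}\cong (R,+)$ with $R=\BF_p\langle t_1,\ldots,t_k\rangle$ is a countably infinite, locally finite abelian group of exponent $p$, i.e.\ isomorphic to $\bigoplus_{\BN}\BF_p$. Such a group is infinite and amenable, hence fails property (T) and therefore property FH, so it admits an affine isometric action on an infinite-dimensional Hilbert space with no global fixed point (and with unbounded orbits). A Hilbert space is a Hadamard manifold of constant curvature $0$, so its sectional curvature is bounded below as strongly as one could wish; thus no amount of ``two-sided comparison control'' or uniform convexity extracted from pinching can force your circumcenter net $(c_\alpha)$ to converge for a general isometric action of $U_{ij}$. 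Any correct proof must exploit that $U_{ij}$ sits inside $\gC$ and that the ambient action is a $\gC$-action --- which your argument never does until the final patching step, and that step (a common fixed point for $\mathrm{SL}_n(\BF_p)$ and one root subgroup, two subgroups which together generate essentially all of $\gC$) is as hard as the theorem itself. Your guess about the role of $p\ge 5$ is also off: it is not about perfectness or the Schur multiplier of $\mathrm{SL}_n(\BF_p)$, but about a quantitative representation-theoretic angle estimate.

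For comparison, the paper derives the theorem in one line from Theorem \ref{infinite fix point criterion} together with Lemma \ref{lem:>60}. The group is generated by three finite subgroups $G_1,G_2,G_3$ whose pairwise products $G_{i,j}$ are finite, so each $G_{i,j}$ fixes a point by Cartan's theorem; Lemma \ref{lem:>60} (this is where $p\ge5$ enters) gives $\sphericalangle(G_i,G_j)>\pi/3+\gd$ for the linearized orthogonal representations on tangent spaces. One then minimizes $f(x,y,z)=d^2(x,y)+d^2(y,z)+d^2(z,x)$ over triples of points fixed by the three $G_{i,j}$: a near-minimizing triangle would have all angles $>\pi/3$, contradicting the CAT(0) angle-sum bound unless it degenerates to a single $G$-fixed point. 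The delicate issue in infinite dimension is that the infimum of $f$ might be positive but not attained; this is excluded by rescaling the metric, passing to an ultralimit (which is a Hilbert space precisely because the curvature is pinched --- this is where the lower curvature bound is actually used), and invoking property FH of $G$ to rule out the resulting fixed-point-free affine action. If you want to salvage your outline, you would need to replace the root-subgroup step by an argument of this global, spectral/angle-theoretic nature rather than one that treats $U_{ij}$ in isolation.
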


For the Steinberg groups defined over the ring $R=\BZ \langle t_1, \ldots ,t_k\rangle$ such a theorem cannot be true. Being an unbounded subgroup in $SL_n(\BR)$, $SL_n(\BZ)$ is acting on the symmetric space associated with $SL_n(\BR)$ without a fixed point. Since $SL_n(\BZ)$ is a quotient of $\gC$ this induces a fixed point free $\gC$ isometric action. However the following is true.
 \begin {thm} \label {non  commutative universal lattices}
 Let $\gC=St_n(R)$ $(n \geq 3)$ when $R=\BZ \langle   t_1,\ldots,t_k\rangle$. Suppose that $X$ is a CAT(0) space and that $H$ is a group acting by isometries on $X$ properly and co-compactly (and faithfully) then any homomorphism $\phi : \gC \rightarrow H$ has a finite image.
 \end {thm}
\begin {rem}
Recall that $SL_n(\BZ)$ is a non-uniform lattice in $SL_n(\BR)$. The theorem above gives a nice rigidity property. Namely, $SL_n(\BZ)$ cannot be mapped onto co-compact lattices in CAT(0) groups.
\end{rem}
\begin {rem}
We point out that a fixed point theorem for these groups acting on low dimensional CAT(0) cell complexes was established by Farb (see \cite {F}.)
\end{rem}

\subsection {Ideas and Techniques}
 Results similar to that of Theorem \ref {main result} were obtained by Wang, followed by the work of Izeki and Nayatani (see \cite {W}, \cite {IN}) who showed that many lattices in semi-simple algebraic groups over p-adic field have fixed point property. As mentioned above Naor and Silberman also obtained fixed point property related to action of random groups on many convex spaces. In both cases the results were obtained by carrying some averaging process. This process yields some heat equations. Spectral gap ensures the process terminates with a fixed point. A key step is to obtain Poincare inequalities. Those are in general hard to obtain. The methods just described are inspired by Zuk's criteria used for proving property (T). Our techniques are also borrowed from methods used for proving property (T). We try to adopt the geometric approach.

 The geometric approach towards proving property (T) was first introduced by Dymara and Januszkiewicz in \cite {DJ}, and then developed by Ershov, Jaikin and Kassabov \cite {EJ} \cite {K} \cite {EJK}. The main idea is to examine angles between invariant spaces of finite (compact in the non-discrete case) subgroups generating $\gC$. Since these groups are finite, each of them has property (T) which means almost invariant vectors are "close" to invariant vectors. If on the other hand the angles between any two respective invariant vector spaces is "large enough" then the invariant vectors spaces of the finite groups are "far" from each other. The conclusion is that when no non-zero $\gC$ invariant vectors exist, almost invariant vectors are trivial and the group has property (T). In the case $\gC=\langle G_1,G_2,G_3\rangle $ the meaning of "large enough" is that these angles' sum is greater than $\pi$ (see \cite {EJ} and \cite {K}).

 Our method is similar. We study the action of small (finite mostly) subgroups of $\gC$. We use the relations between these small groups to study the action of the bigger group. When proving fixed point property for Hadamard manifolds we seek "fat" triangles. By saying "fat" we mean triangles in which the sum of the angles is greater than $\pi$. We will present a triangle whose vertices are fixed by the finite groups and that the angle between any two sides of it is at least the angle between the invariant spaces. In our case, we look at triangle which is minimal in the sense that the sum of squares of lengths of its sides is minimal. As the sum of the angles in any CAT(0) space can't be larger than $\pi$ we deduce that the triangle is a single point.
Recently (and independently) Ershov and Jaikin adopted a similar method and proved a fixed point theorem regarding  isometric group actions of these groups on $L_p$ spaces.  Mimura \cite {Mim} used different (purely algebraic) methods and proved fixed point properties related also to non commutative $L_p$ spaces (provided that $n \geq 4$.)

\subsection {Property FH}
When the underlying space is a Hilbert space $\mathcal H$ the ideas introduced above become very explicit. In this section we illustrate these ideas by giving an affine version of Kassabov's proof for the fact that these groups have property (T) (compare with Theorem 5.9 in \cite {EJ} and Theorem 1.2 in \cite {K}). We prove :
\begin {thm} \label {FH}
Let $G$ be a group satisfying the following properties:
\begin {enumerate}
\item $G= \langle G_1,G_2,G_3 \rangle$ where each pair $G_i,G_j$ generates a finite group.
\item For any orthogonal representation $(\pi,\mathcal H) $ of the groups $G_{i,j}=\langle {G_i, G_j}\rangle$, every $v \in \mathcal H$ satisfies the following property:
\begin {equation} \label {bbc} 
 d^2_0(v) < 2(d^2_i(v) +d_j^2(v))
\end {equation}
where $d_0(v)$ denotes the distance of $v$ from $ \mathcal H^{G_{i,j}}$, the (closed) space of $G_{i,j}$ invariant vectors, and $d_i(v)$ measure the distance between $v$ and $\mathcal H^{G_i}$.
\end {enumerate}
then $G$ has property FH.
\end {thm}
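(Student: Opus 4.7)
The plan is to take an arbitrary affine isometric action of $G$ on a Hilbert space $\mathcal{H}$, with linear part $\pi$, and construct a global fixed point. Each $G_{ij}=\langle G_i,G_j\rangle$ is finite, so its affine fixed set $V_{ij}\subset \mathcal{H}$ is a non-empty closed affine subspace with direction $\mathcal{H}^{\pi(G_{ij})}$, and $V_{12}\cap V_{13}\cap V_{23}$ is precisely the fixed set of $G$. I would choose $v_{ij}\in V_{ij}$ minimising
\[
T := \|v_{12}-v_{13}\|^2+\|v_{12}-v_{23}\|^2+\|v_{13}-v_{23}\|^2,
\]
with existence of a minimising triple following from standard convex-minimisation arguments modulo the global translation directions $\mathcal{H}^{\pi(G)}$, and argue that the resulting triangle must degenerate.

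Two structural observations drive the estimate. First, each edge has its direction in a single fixed subspace: since $v_{12},v_{13}\in V_1$, the edge $e_1:=v_{13}-v_{12}$ lies in $\mathcal{H}^{\pi(G_1)}$; similarly $e_2:=v_{23}-v_{12}\in \mathcal{H}^{\pi(G_2)}$ and $e_3:=v_{23}-v_{13}\in \mathcal{H}^{\pi(G_3)}$. Second, first-order optimality at $v_{12}$ forces $e_1+e_2 \perp \mathcal{H}^{\pi(G_{12})}$; writing $e_1=a+b_1$ and $e_2=-a+b_2$ with $a\in \mathcal{H}^{\pi(G_{12})}$ and $b_i\in \mathcal{H}^{\pi(G_i)}\ominus \mathcal{H}^{\pi(G_{12})}$, this forces the $\mathcal{H}^{\pi(G_{12})}$-components of the two incident edges to be exact negatives.

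The heart of the proof is applying hypothesis (2) to the balanced test vector
\[
w := \|b_2\|\,b_1 + \|b_1\|\,b_2 \in \mathcal{H} \ominus \mathcal{H}^{\pi(G_{12})}.
\]
Direct expansion gives $d_0(w)^2=\|w\|^2=2\|b_1\|^2\|b_2\|^2+2\|b_1\|\|b_2\|c$, $d_1(w)^2=\|b_1\|^2(\|b_2\|^2-\|P_1 b_2\|^2)$, and $d_2(w)^2=\|b_2\|^2(\|b_1\|^2-\|P_2 b_1\|^2)$, where $c:=\langle b_1,b_2\rangle$ and $P_i$ is orthogonal projection onto $\mathcal{H}^{\pi(G_i)}$. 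Substituting into (2) and using the Cauchy--Schwarz bounds $\|P_2 b_1\|^2 \ge c^2/\|b_2\|^2$ and $\|P_1 b_2\|^2 \ge c^2/\|b_1\|^2$ reduces the hypothesis to the strict quadratic inequality $2c^2+\|b_1\|\|b_2\|\,c-\|b_1\|^2\|b_2\|^2<0$, forcing $c<\tfrac12\|b_1\|\|b_2\|$. Combining with $\langle e_1,e_2\rangle=c-\|a\|^2$ and the AM--GM bound $\|e_1\|\|e_2\|\ge\|a\|^2+\|b_1\|\|b_2\|$ yields $\cos(\angle v_{13}v_{12}v_{23})<\tfrac12$, so the angle at $v_{12}$ strictly exceeds $\pi/3$. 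By symmetry the same holds at each vertex, and summing contradicts the Euclidean identity that the interior angles of any non-degenerate triangle in a Hilbert space total exactly $\pi$. Hence the triangle must degenerate, and in each degenerate case ($b_j$ vanishing at some vertex, or two vertices coinciding) the collapsed point lies in some $V_{ij}\cap V_{ik}$, which equals the fixed set of $G$.

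The main technical obstacle is the angle estimate, specifically the choice of test vector in (2): applying the hypothesis to $b_1$ or $b_2$ alone only yields $\cos\phi<1/\sqrt{2}$, leaving the angle sum at $>3\pi/4$ and producing no contradiction, whereas the balanced combination $\|b_2\|b_1+\|b_1\|b_2$ achieves the sharp $\cos\phi<1/2$ needed for the triangle argument. A secondary technicality is existence of the minimising triple, routinely handled by weak compactness after quotienting by the global translation subgroup $\mathcal{H}^{\pi(G)}$.
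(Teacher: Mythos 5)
Your argument is correct, but it takes a genuinely different route from the paper's proof of this theorem. The paper minimizes the one-variable functional $q\mapsto d^2(q,\mathcal H^{G_{1,2}})+d^2(q,\mathcal H^{G_{1,3}})+d^2(q,\mathcal H^{G_{2,3}})$, observes that a minimizer is the barycenter of its three projections, applies the hypothesis once at $q$ for each pair and sums, and contradicts the Euclidean barycenter--median inequality $d^2(q,x)\ge 4\,d^2(q,[y,z])$; no angle at a vertex is ever computed. You instead minimize the perimeter-type functional over triples in the three fixed affine subspaces and show each angle of the minimal triangle exceeds $\pi/3$, contradicting the angle-sum identity --- this is precisely the strategy the paper reserves for the manifold case (Claim \ref{angles in minimal triangle} and Lemma \ref{inf triangle}), transplanted to the flat setting, so your write-up in effect unifies the two arguments. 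Your key computation is sound: first-order optimality forces the $\mathcal H^{\pi(G_{12})}$-components of the incident edges to be opposite, the balanced test vector $\|b_2\|b_1+\|b_1\|b_2$ fed into \eqref{bbc} together with $\|P_1b_2\|^2\ge c^2/\|b_1\|^2$ yields $(2c-\|b_1\|\|b_2\|)(c+\|b_1\|\|b_2\|)<0$ and hence $c<\tfrac12\|b_1\|\|b_2\|$, and the Cauchy--Schwarz bound $\|e_1\|\|e_2\|\ge\|a\|^2+\|b_1\|\|b_2\|$ converts this into $\cos\angle<\tfrac12$; your observation that the naive test vectors $b_1$ or $b_2$ only give $1/\sqrt2$ is exactly why the balanced choice is needed, and your treatment of the degenerate cases (a vanishing $b_j$ or coinciding vertices lands you in $V_{ij}\cap V_{ik}=\mathrm{Fix}(G)$) is right. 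What your approach buys is a proof that visibly runs on the same engine as the geometric one; what the paper's buys is a shorter computation (one application of \eqref{bbc} per pair, no trigonometry). The one step you should not wave away is the existence of the minimizing triple: $T$ only controls differences, and after quotienting by $\mathcal H^{\pi(G)}$ a minimizing sequence can still escape to infinity inside $\mathcal H^{\pi(G_{12})}\ominus\mathcal H^{\pi(G)}$, so weak compactness does not apply off the shelf; this is the genuine gap between FH and ``almost FH''. The paper's own existence claim (that the image of $x\mapsto(x-\pi_{1,2}(x),x-\pi_{1,3}(x),x-\pi_{2,3}(x))$ is a closed affine subspace) silently assumes the same kind of closed-range statement, so the gap is shared, but if you want your proof self-contained you should derive the needed coercivity from the angle hypothesis rather than cite compactness.
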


\begin {rem}
\begin{enumerate}
\item It is readily verified that \ref {bbc} is equivalent to having angles greater than $\pi/3$ between the corresponding subgroups as defined in the next section (see discussion in \cite {K}). This together with \ref {lem:>60} give the desired result regarding the Steinberg groups.
\item The fact that $G_{i,j}$ are finite ensures that $ \mathcal H^{G_{i,j}}$ is not empty in any $G$ isometric affine action.
\end{enumerate}
\end {rem}
As explained above we are interested in fat triangles. We will introduce one by minimizing the radius of the barycentric circle. Given an affine isometric $G$ action, $(\rho,\mathcal H)$ we define a function $f:\mathcal H \rightarrow \BR$, by
$$x \mapsto d^2(x,\mathcal H^{G_{1,2}})+d^2(x,\mathcal H^{G_{1,3}})+d^2(x,\mathcal H^{G_{2,3}}).$$
\begin {clm}
Suppose that $(\rho,\mathcal H)$ is an isometric affine action and $f$ is the function defined above then $f$ attains a minimum.
\end {clm}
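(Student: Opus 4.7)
Each subspace $V_{i,j}:=\mathcal{H}^{G_{i,j}}$ is a closed affine subspace of $\mathcal{H}$: it is the intersection of fixed-point sets of the affine isometries $\rho(g)$ for $g\in G_{i,j}$, and it is nonempty because the orbit average $|G_{i,j}|^{-1}\sum_{g\in G_{i,j}}\rho(g)x$ is a $G_{i,j}$-fixed point. Denoting by $Q_{i,j}$ the orthogonal projection onto the orthogonal complement of the linear part $V_{i,j}^{0}$ and fixing any $p_{i,j}\in V_{i,j}$, one has $d^{2}(x,V_{i,j})=\|Q_{i,j}(x-p_{i,j})\|^{2}$; hence $f$ is a nonnegative convex continuous quadratic function on $\mathcal{H}$.

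I would prove the claim by the direct method. Let $\alpha:=\inf f$ and pick a minimizing sequence $(x_{n})$ with $f(x_{n})\searrow\alpha$. Since every summand is translation-invariant along $V_{i,j}^{0}$, the function $f$ is invariant under translation by the common linear part $W:=\bigcap_{i<j}V_{i,j}^{0}$; subtracting the $W$-component, I may assume $(x_{n})\subset W^{\perp}$. The parallelogram law applied to each $Q_{i,j}$ then yields the strong-convexity identity
\[
f(x)+f(y)-2f\!\left(\tfrac{x+y}{2}\right) \;=\; \tfrac{1}{2}\,\langle A(x-y),\,x-y\rangle,\qquad A:=\sum_{i<j}Q_{i,j}.
\]
Since the left hand side is bounded above by $f(x_{n})+f(x_{m})-2\alpha\to 0$ along the minimizing sequence, one gets $\langle A(x_{n}-x_{m}),\,x_{n}-x_{m}\rangle\to 0$. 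The operator $A$ is bounded, positive, and satisfies $\ker A=\bigcap\ker Q_{i,j}=W$, so $A$ is injective on $W^{\perp}$.

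The key step is now to pass from the $A$-seminorm convergence $\|A^{1/2}(x_{n}-x_{m})\|\to 0$ to a norm-bounded sequence. In finite dimensions this is automatic: the smallest eigenvalue $\lambda>0$ of $A|_{W^{\perp}}$ gives $\|x_{n}-x_{m}\|^{2}\le\lambda^{-1}\langle A(x_{n}-x_{m}),\,x_{n}-x_{m}\rangle\to 0$, so $(x_{n})$ is Cauchy, converges to some $x^{\ast}\in W^{\perp}$, and continuity yields $f(x^{\ast})=\alpha$. In the infinite-dimensional case the same conclusion requires a spectral lower bound for $A$ on $W^{\perp}$; once this is in hand, $(x_{n})$ is bounded, weak compactness of the unit ball gives a subsequential weak limit $x^{\ast}\in W^{\perp}$, and weak lower semicontinuity of the convex continuous function $f$ gives $f(x^{\ast})\le\liminf f(x_{n})=\alpha$.

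The hard part is precisely this spectral-gap step in infinite dimensions: ruling out that $\sum_{i<j}\|Q_{i,j}v\|^{2}/\|v\|^{2}$ approaches zero along some sequence in $W^{\perp}$. The strict inequality in hypothesis (2) of Theorem~\ref{FH}—the angle-greater-than-$\pi/3$ condition—is what supplies the required uniform bound, and this will be the main technical input feeding into the abstract minimization argument above.
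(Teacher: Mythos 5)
Your reduction is set up correctly (the $V_{i,j}$ are nonempty closed affine subspaces, $f(x)=\sum\|Q_{i,j}(x-p_{i,j})\|^2$, and the identity $f(x)+f(y)-2f(\tfrac{x+y}{2})=\tfrac12\langle A(x-y),x-y\rangle$ with $A=\sum_{i<j}Q_{i,j}$ is right), and your route differs from the paper's: the paper observes that $f(x)=\|\Phi(x)\|^2$ for the affine map $\Phi(x)=\bigl(x-\pi_{1,2}(x),x-\pi_{1,3}(x),x-\pi_{2,3}(x)\bigr)$ into $\mathcal H\times\mathcal H\times\mathcal H$ and takes a preimage of the point of the image closest to the origin, whereas you run the direct method on a minimizing sequence. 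The problem is that your proof stops exactly at its load-bearing step: the lower bound $\langle Av,v\rangle\ge\lambda\|v\|^2$ on $W^{\perp}$ is never established. You assert that hypothesis (2) of Theorem \ref{FH} ``supplies the required uniform bound,'' but that is not a formal consequence of (2): hypothesis (2) constrains the angle between $\mathcal H^{G_i}$ and $\mathcal H^{G_j}$ inside representations of $G_{i,j}$, while $A$ is built from the three subspaces $\mathcal H^{G_{1,2}},\mathcal H^{G_{1,3}},\mathcal H^{G_{2,3}}$, about whose mutual position (2) says nothing directly. As submitted, the argument reduces the claim to an unproved lemma.

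Moreover, that lemma is not a light technicality. Writing $\pi_0$ for the linear part of the action, one has $V_{i,j}^{0}=\mathcal H^{\pi_0(G_{i,j})}$, hence $W=\mathcal H^{\pi_0(G)}$, and the finite-group displacement estimates $\tfrac12\max_{g\in G_{i,j}}\|v-\pi_0(g)v\|\le d(v,V_{i,j}^{0})\le\max_{g\in G_{i,j}}\|v-\pi_0(g)v\|$ show that your spectral gap for $A$ on $W^{\perp}$ is equivalent, up to constants, to the Kazhdan inequality $\max_{g\in\cup G_{i,j}}\|v-\pi_0(g)v\|\ge\kappa\|v\|$ on $(\mathcal H^{\pi_0(G)})^{\perp}$ --- i.e. essentially to property (T) for $G$, which (via Delorme--Guichardet) is the very statement this section sets out to reprove; filling the gap this way risks circularity. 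To be fair, you have correctly located where the real content lies: the paper's own one-line proof has the same soft spot in infinite dimensions, since the nearest-point projection onto the image of $\Phi$ exists only if that image is closed, and closedness of the range is again equivalent to your spectral bound. But locating the difficulty is not the same as resolving it. You should either derive the bound on $A$ from hypothesis (2) together with the finiteness of the $G_{i,j}$ (if it can in fact be so derived), or note that in the finite-dimensional case the image of an affine map is automatically closed and the claim is immediate by either argument.
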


\begin {proof}
Indeed the affine map $x \mapsto ((x-\pi_{1,2}(x)),(x-\pi_{1,3}(x)),(x-\pi_{2,3}(x)))$ (with $\pi_{i,j}$ denoting the projection on $\mathcal H^{G_{i,j}}$) maps $\mathcal H$ onto an affine subspace of $\mathcal H \times \mathcal H \times \mathcal H$. A pre-image of the closest point to $0$ in this subspace is a minimum.
\end {proof}

\begin {proof}[Proof of Theorem \ref {FH}]
Suppose towards contradiction that $(\rho ,\mathcal H)$ is an affine isometric fixed point free $G$ action. Let $q \in \mathcal H$ be a point minimizing $f$. For simplicity denote the projections of $q$ on the fixed points spaces $\mathcal H^{G_{i,j}}$ by $x,y,z$. Note that since $q$ is minimizing for $f$ we can assume that it is the barycenter of $\{x,y,z\}$ this means $q=\frac {x+y+z} {3}$. The restriction of the action to the corresponding subgroup is equivalent to an orthogonal representation. By applying \ref {bbc} three times and summing we obtain:
\[
d^2(q,x)+d^2(q,y)+d^2(q,x) < 4(d_1^2(q)+d_2^2(q)+d_3^2(q)) \leq 4(d^2(q,[x,y])+d^2(q,[x,z])+d^2(q,[y,z]))
\]
(the second inequality follows from the fact that the segment connecting two vertices is fixed by the intersection of the corresponding subgroups). However for a barycenter point in an Euclidean triangle this is impossible. Indeed it is well known that the barycenter lies on the intersection of the medians. The barycenter divides each median segment into two subsegments. The first connects the barycenter to the vertex and is twice as long as the second which connects the barycenter to the middle of the opposite side. In general the segment connecting the barycenter to the middle of the side is longer than the distance from the barycenter to that side.
We have then:
\[
d^2(q,x) \geq 4d^2(q,[y,z]).
\]
(and same for the other vertices.) This gives a contradiction and the statement is proved.
\end {proof}

\subsection *{Acknowledgment}
Many ideas appearing in this paper were suggested by T. Gelander and U. Bader. The question concerning finiteness of the Steinberg groups over finite index ideals was addressed to M. Ershov and I. Rapinchuk who both gave quick and helpful responses.


\section{preliminaries}
\subsection {Angles between  Invariant Subspaces}
Let $H$ be a finite group acting on a Hadamard manifold $X$. Recall that Hadamard manifolds are complete simply connected non-positively curved Riemaniann manifolds (possibly of infinite dimension). By a classical theorem of Cartan $H$ fixes a point in $X$. Suppose that $x_0 \in X$ is fixed by $H$ and that $\xi$ is a geodesic ray issuing from $x_0$, then $\xi$ is mapped onto another ray also issuing from $x_0$. The action then reduces to a representation on the tangent space at $x_0$, denoted by $T_{x_0}$. Furthermore as isometric maps preserves angles, this representation is actually orthogonal. This motivates the study of angles between invariant subspaces in orthogonal representations in the context of isometric actions on manifolds.
Recall Kassabov's definition for angles between closed subspaces (see \cite {K}):
\begin {defn} \label {defn: angles between subspaces}
Let $V_1$ , $V_2$ be two closed subspaces in a Hilbert space. We define the angle between $V_1$ and $V_2$ to be the infimum over the angles between vectors $v_i \in V_i$  $i=1,2$ such that $v_i \perp V_1 \cap V_2$ .
i.e. $$\sphericalangle (V_1 ,V_2) = inf \{ \sphericalangle (v_1,v_2)\mid v_i\in V_i ~and~ v_i \perp V_1 \cap V_2 \}$$
\end {defn}
Note that this is equivalent to say that $$ cos (\sphericalangle (V_1 ,V_2)) = sup \left\{\frac {\langle v_1,v_2 \rangle} {\| v_1\| \| v_2\| }\mid v_i\in V_i ~and~ v_i \perp V_1 \cap V_2 \right\}$$

When $V$ is a unitary representation of $G$ we denote by $V^G$ the (closed) subspace of invariant vectors in $V$. It is convenient then to define angle between subgroups:
\begin {defn} \label {angles between subgroups}

Suppose $H= \langle G_1,G_2 \rangle$ the angle between $G_1$ and $G_2$ is defined as
$$\sphericalangle (G_1,G_2)= inf \{\sphericalangle (V^{G_1} ,V^{G_2} ) \mid V \mbox{ is a unitary representation   of  H}\}$$
\end {defn}
\begin {rem}  \label {angles in real vector space}
\begin {enumerate} 
\item The tangent space at a point $x \in X$ is a real vector space. An isometric representation on a real vector space will be called Orthogonal while an isometric representation on a complex vector space will be called Unitary.
\item Given an orthogonal representation on a real vector space $V$, denote by $U=V^{\BC}$ the complexification of $V$, $U=V \otimes \BC$. Given subgroups $G_1,~G_2$ and an orthogonal representation on a real vector space $V$, one can easily verify that $\sphericalangle (V^{G_1} ,V^{G_2}) =\sphericalangle (U^{ G_1} ,U^{ G_2}) $. Therefore  $\sphericalangle (G_1,G_2)$ forms a lower bound on angles between invariant subspaces in real vector space.
\end {enumerate}
\end {rem}
\begin {rem}
Recall that a representation of a finite (compact) group can be decomposed as a direct sum of irreducible ones. Thus, when $G_{1,2}$ is finite the phrase "any unitary representation" in the Definition \ref {angles between subgroups} is equivalent  to "any irreducible unitary representation".
\end {rem}
In the next section we are going to give a criterion for a group, generated by finite subgroups, to have fixed point property.

\begin {thm}  \label {infinite fix point criterion}
Let $G=\langle G_1,G_2,G_3\rangle$ where $G_{i,j}=\langle G_i,G_j\rangle~i,j=1,2,3$ are finite groups. Suppose that $G$ is acting isometrically on a Hadamard manifold $X$.
If the sectional curvature of $X$ is bounded from below ($X$ can be of infinite dimension then) and  there exist $\theta > \pi/3$ such that $\sphericalangle (G_i,G_j) \ge \theta,~1 \leq i \ne j \le 3$  then $G$ has a global fixed point in $X$.
\end {thm}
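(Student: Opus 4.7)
The plan is to adapt the Hilbert-space strategy of Theorem \ref{FH} to the CAT(0) setting, replacing flat orthogonal projections by projections onto totally geodesic fixed-point sets inside $X$. By Cartan's fixed point theorem, each finite subgroup $G_{ij}$ has a non-empty, closed, totally geodesic fixed-point set $C_{ij}:=X^{G_{ij}}$, and the tangent space to $C_{ij}$ at any $p\in C_{ij}$ is the $G_{ij}$-invariant subspace $T_p^{G_{ij}}$. The natural functional to consider is
\[
F\colon C_{12}\times C_{13}\times C_{23}\to \BR,\qquad F(x,y,z):=d^2(x,y)+d^2(x,z)+d^2(y,z),
\]
and the first step is to produce a minimizer $(x_0,y_0,z_0)$. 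In finite dimensions this is routine from metric properness; in infinite dimensions this is the main technical obstacle, and it is precisely where the lower sectional-curvature bound enters, supplying enough weak compactness (for instance through a Bruhat--Tits-style circumcentre construction on bounded subsets of $X$) to extract a minimizer from a bounded minimizing sequence.

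Once such a minimizer is in hand, the plan is to show that the geodesic triangle $\gD(x_0,y_0,z_0)$ must collapse. Set $v:=\exp_{x_0}^{-1}(y_0)$ and $w:=\exp_{x_0}^{-1}(z_0)$. Since $G_1\subseteq G_{12}\cap G_{13}$ fixes both endpoints of $[x_0,y_0]$, it fixes the entire segment, so $v\in T_{x_0}^{G_1}$; symmetrically $w\in T_{x_0}^{G_2}$. First-order optimality for $F$ in the $x$-variable on the totally geodesic submanifold $C_{12}$ yields
\[
v+w\;\perp\;T_{x_0}C_{12}=T_{x_0}^{G_{12}}=T_{x_0}^{G_1}\cap T_{x_0}^{G_2}.
\]

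Decomposing $v=v_\parallel+v_\perp$ and $w=w_\parallel+w_\perp$ orthogonally with respect to $T_{x_0}^{G_{12}}$, this orthogonality forces $v_\parallel=-w_\parallel$, while $v_\perp\in T_{x_0}^{G_1}$ and $w_\perp\in T_{x_0}^{G_2}$ are automatically perpendicular to the intersection $T_{x_0}^{G_{12}}$; so Definition \ref{defn: angles between subspaces} together with Remark \ref{angles in real vector space} gives $\sphericalangle(v_\perp,w_\perp)\ge\gth$. Writing $a=\|v_\parallel\|=\|w_\parallel\|$, $b=\|v_\perp\|$, $c=\|w_\perp\|$, the identity $\langle v,w\rangle=-a^2+\langle v_\perp,w_\perp\rangle$ combined with $\|v\|\|w\|=\sqrt{(a^2+b^2)(a^2+c^2)}\ge bc$ yields
\[
\cos\sphericalangle(v,w)\le\frac{-a^2+bc\cos\gth}{\sqrt{(a^2+b^2)(a^2+c^2)}},
\]
and a short computation exploiting the strict inequality $\cos\gth<1/2$ shows this upper bound is itself strictly less than $1/2$, so the angle of $\gD$ at $x_0$ strictly exceeds $\pi/3$. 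Running the identical argument at $y_0$ and $z_0$ produces a geodesic triangle whose angle sum is $>\pi$, contradicting the CAT(0) angle comparison; this forces $\gD$ to degenerate, and any such degeneration places a point in some $C_{ij}\cap C_{ik}$, yielding the desired fixed point of $G=\langle G_{12},G_{13},G_{23}\rangle$.
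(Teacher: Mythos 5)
Your angle computation at a minimizer is essentially sound --- it is in fact a cleaner, variational version of the paper's Claim \ref{angles in minimal triangle} (the paper instead perturbs the vertex along $\exp(W_0)$ and shows $f$ strictly decreases) --- but the argument collapses at the step you dismiss in one sentence: the existence of a minimizer. Your proposed remedy, ``weak compactness \ldots through a Bruhat--Tits-style circumcentre construction on bounded subsets of $X$'', does not address the actual difficulty, for two reasons. First, a minimizing sequence $(x_n,y_n,z_n)$ for $F$ need not be bounded: only the mutual distances $d(x_n,y_n)$, etc.\ are controlled, while the triangles themselves may escape to infinity (already in a Hilbert space, three closed convex sets can have pairwise distance infima that are not attained, with the near-optimal configurations running off to infinity). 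Second, $F$ is convex but not uniformly convex on the product, so even a bounded minimizing sequence need not converge; circumcentres give fixed points of bounded orbits, not minimizers of general convex functionals. Note that even in finite dimensions the paper does not claim a minimizer exists (its remark after Claim \ref{angles in minimal triangle} only extracts a fixed point at infinity when $f\not\to\infty$), so ``routine from metric properness'' is not right either.

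The paper's proof is structured precisely to circumvent this. It splits into two cases: (i) if $\inf f=L>0$, a quantitative version of your angle estimate (Lemma \ref{inf triangle}) shows that every triangle with $f<L+\epsilon$ is already fat, contradicting the CAT(0) angle-sum bound without ever producing a minimizer --- this is one place the lower sectional curvature bound genuinely enters, via comparison estimates; (ii) if $\inf f=0$ with no minimizer, one selects points $x_n$ with $h(x_n)\to 0$ whose $K$-orbit diameters are controlled on small balls, rescales $X$ by $1/\mathrm{diam}(K\cdot x_n)$, and takes an ultralimit; the lower curvature bound forces the rescaled curvatures to vanish, so the limit is a Hilbert space carrying a fixed-point-free affine isometric action of $G$, contradicting property FH of $G$ (Theorem \ref{FH}, i.e.\ the angle criterion for property (T)). Your proposal uses neither the rescaling/ultralimit argument nor property FH, and misattributes the role of the curvature bound to a compactness statement it does not supply. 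As written, the proof has a genuine gap at its foundational step.
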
 
In order to apply Theorems \ref {infinite fix point criterion} we need to study representation theory of finite subgroups of the Steinberg group.

\subsection {The Steinberg Group Over a Unital Ring}
Recall the definition of the Steinberg group over unital ring. Let $R$ be any unital ring (in our case $R$ will be $\BZ \langle   t_1,\ldots,t_k\rangle$ or $\BF_p \langle   t_1,\ldots,t_k\rangle$ ). The Steinberg group over $R$ of dimension n, denoted $St_n(R)$, is defined to be the group generated by $x_{i,j}(r)$ where $r \in R$ and $ 1 \leq i \neq j \leq n$, subject to the relations:
\begin {enumerate}
\item $x_{i,j}(r_1)x_{i,j}(r_2)=x_{i,j}(r_1+r_2)$
\item $[x_{i,j}(r_1),x_{j,k}(r_2)]=x_{i,k}(r_1r_2)$
\item $[x_{i,j}(r_1),x_{l,k}(r_2)]=1$ when $j\ne l$.
\end {enumerate}
Where $[x,y]=x^{-1}y^{-1}xy$.

\begin {rem}
\begin {enumerate}
\item The map defined by $x_{i,j}(r) \mapsto e_{i,j}(r)$ ($e_{i,j}(r)$ is the elementary matrix with $1$ on the diagonal, $r$ in the $(i,j)$ place and $0$ elsewhere) can be extended to a surjection: $\phi :St_n(R) \rightarrow EL_n(R)$ to the normal group generated by elementary matrices. If $R$ is commutative there is a natural definition of determinant and $EL_n(R)$ is a subgroup of $SL_n(R)$ (the kernel of the determinant map).
\item This is related to Algebraic K-Theory. The quotient 
$$SL_n(R)/EL_n(R)$$
is denoted as $SK_1(n,R)$. Further the kernel of $\phi$ is closely related to $K_2(R)$ (it is a subgroup of $K_2(R)$).
\end {enumerate}
\end{rem}
\begin {exam}
When $R=\BZ$ and $n \ge3$ this becomes very explicit:
\begin {enumerate}
\item It is easy to verify that any matrix in $SL_n(\BZ)$ can be written as a product of elements of $EL_n(\BZ)$ hence $SK_1(n,\BZ)$ is trivial.
\item $K_2$ however is not trivial. For example 
$$x=(x_{1,2}(1)x_{2,1}(-1)x_{1,2})^4$$
is an element of order $2$ in the kernel of $\phi$. It is true however that the kernel of $\phi$ has exactly two elements (for this see Theorem 10.1 of \cite {M}). This gives an alternative description of $SL_n(\BZ)$ in terms of generators and relations.
\end {enumerate}
\end {exam}

 Next we collect some basic facts regarding to $St_n(R)$ and its representation theory. Throughout assume $R=\BF_p \langle   t_1,\ldots,t_k\rangle$ (similar results are true for $R=\BZ \langle   t_1,\ldots,t_k\rangle$). The following claim is easily verified:
\begin {clm}
The group $St_n(R)$ is generated by the following subgroups:

\begin{itemize}
\item
$G_1= \langle x_{1,n-1}(1),x_{2,n-1}(1), \ldots , x_{n-2,n-1}(1)\rangle \cong \BF_p^{n-2} $.
\item $G_2=\langle x_{n-1,n}(1)\rangle \cong \BF_p$ .
\item $G_3=\langle x_{n,1}(a_0+a_1t_1+\ldots a_kt_k) ,\ldots x_{n,n-2}(a_0+a_1t_1+\ldots a_kt_k)\rangle \cong (\BF_p^{n-2})^k$.

\end{itemize} 
\end {clm}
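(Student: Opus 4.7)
The claim has two parts: (a) the three subgroups jointly generate $St_n(R)$; (b) each $G_i$ has the stated abelian structure. I would dispose of (b) first and then concentrate on (a), where the strategy is to produce each Steinberg generator $x_{i,j}(r)$ as a word in the listed elements using only the defining relations (1)--(3).

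For (b): relation (1) gives $x_{i,j}(0) = 1$ and $x_{i,j}(r)^{-1} = x_{i,j}(-r)$. For $G_1$, any two of its displayed generators $x_{i,n-1}(1), x_{i',n-1}(1)$ share the second index $n-1$ but have distinct first indices, so relation (3) forces them to commute; the same observation handles $G_3$. Since $R$ has characteristic $p$, each generator has order $p$, producing the asserted isomorphism types.

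For (a) I would bootstrap in three waves. Wave one combines $G_1$ with $G_2$ via relation (2):
\[
[x_{i, n-1}(1), x_{n-1, n}(1)] = x_{i, n}(1), \qquad 1 \le i \le n-2.
\]
Wave two combines the elements just produced with $G_3$. For each $x_{n, j}(r)$ in $G_3$, relation (2) gives $[x_{i, n}(1), x_{n, j}(r)] = x_{i, j}(r)$ whenever $i \le n-2$ and $i \ne j$, and similarly $[x_{n-1, n}(1), x_{n, j}(r)] = x_{n-1, j}(r)$; a second commutator against $G_1$ recovers $x_{n, n-1}(r)$ and $x_{i, n-1}(r)$, and one more commutator against $G_2$ delivers $x_{n-1, n}(r)$ and $x_{i, n}(r)$. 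At this stage the subgroup contains $x_{a, b}(r)$ for every ordered pair $a \ne b$ and every $r$ of degree $\le 1$ in the $t_l$. Wave three upgrades the ring element. Since $R$ is generated as an $\BF_p$-algebra by $1, t_1, \ldots, t_k$, every $r \in R$ is a sum of monomials; relation (1) turns such a sum into a product of $x_{a,b}$-factors, while iterated use of relation (2) through the identity $x_{i, j}(t_{l_1} t_{l_2}) = [x_{i, k}(t_{l_1}), x_{k, j}(t_{l_2})]$ (for an intermediate $k$) produces arbitrary monomials by induction on degree. Hence $x_{i, j}(r) \in \langle G_1, G_2, G_3 \rangle$ for every $i \ne j$ and every $r \in R$, completing (a).

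The one point that demands attention is the supply of intermediate indices $k$ in the induction step of wave three: this is automatic for $n \ge 4$, and for $n = 3$ one must cycle among the three available indices and use alternative bracket patterns to build up every monomial. Once that bookkeeping is in place the remainder of the argument is a mechanical unwinding of the Steinberg relations, so I do not foresee a genuine obstacle.
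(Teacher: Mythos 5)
Your argument is correct and is precisely the routine verification the paper has in mind --- the paper offers no proof at all, stating only that the claim ``is easily verified,'' and your three-wave bootstrap through the Steinberg relations (including the extra care needed at $n=3$) is the standard way to carry it out. The only detail worth adding is that to pin down the isomorphism types exactly (rather than just commutativity and exponent $p$, which only bound the groups from above) one should note that the images of the generators under the canonical surjection $St_n(R)\rightarrow EL_n(R)$ are independent elementary matrices; doing that count also shows $G_3$ should be $\BF_p^{(n-2)(k+1)}$, since each generator carries the $k+1$ coefficients $a_0,\ldots,a_k$.
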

\begin {rem}
It is easily verified that for any $1 \le i\ne j \le 3$ the groups $G_{i,j}=\langle G_i,G_j \rangle$ are finite. (see Section 4.1 in \cite {EJ} for more details.)
\end {rem}
We are interested in the angles between $G_i$ and $G_j$.
\begin {lem} \label {lem:>60}
Suppose $p \geq 5$, then there exist $\gd>0$ such that for every irreducible unitary representation $(\pi,V)$ of $\langle G_i,G_j\rangle,~1\le i\ne j\le 3$ the angle $\sphericalangle {(V^{\pi(G_i)},V^{\pi(G_j)})}> \pi/3+\gd$. In particular $\sphericalangle{(G_i,G_j)}> \pi /3$
\end{lem}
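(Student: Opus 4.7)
The plan is to compute, for each pair $(i,j)$ and each irreducible unitary representation $(\pi,V)$ of $G_{i,j}$, the angle $\sphericalangle(V^{\pi(G_i)},V^{\pi(G_j)})$ by first reducing to a non-degenerate Heisenberg quotient and then carrying out an explicit Gauss-sum computation. The resulting cosine bound $1/\sqrt{p}$ is strictly less than $1/2=\cos(\pi/3)$ precisely when $p\ge 5$, which furnishes the desired $\delta$.

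First I would identify the structure of each $G_{i,j}$. From the Steinberg relations, the only non-trivial commutators among the relevant generators are $[x_{i,n-1}(1),x_{n-1,n}(1)]=x_{i,n}(1)$ for the pair $(1,2)$; $[x_{i,n-1}(1),x_{n,j}(r)]=x_{n,n-1}(-r)$ if $i=j$ and trivial otherwise for the pair $(1,3)$; and $[x_{n-1,n}(1),x_{n,j}(r)]=x_{n-1,j}(r)$ for the pair $(2,3)$. These identify an abelian central subgroup $Z_{ij}\subseteq G_{i,j}$ in each case, and a second application of the Steinberg relations shows that $G_{i,j}/Z_{ij}$ is abelian. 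Hence each $G_{i,j}$ is a finite two-step nilpotent $p$-group of Heisenberg type, and the commutator bracket descends to a bilinear map $B_{ij}\colon G_i\times G_j\to Z_{ij}$.

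Next I would classify the irreducible unitary representations $(\pi,V)$ of $G_{i,j}$ by their central character $\chi$ on $Z_{ij}$. If $\chi$ is trivial, then $\pi$ factors through the abelianization and is one-dimensional; a direct check then shows that Definition \ref{defn: angles between subspaces} admits no non-zero admissible pair of vectors, so the angle is vacuously $+\infty$. If $\chi$ is non-trivial, I consider the induced alternating form $\chi\circ B_{ij}\colon G_i\times G_j\to\C^*$ and let $R_i\subseteq G_i$, $R_j\subseteq G_j$ denote its radicals. Since each element of $R_i$ commutes modulo $\ker\chi$ with every element of $G_j$, Schur's lemma forces $\pi(R_i)$ and $\pi(R_j)$ to act on the irreducible $V$ by scalars; for $V^{\pi(G_i)}$ and $V^{\pi(G_j)}$ to be simultaneously non-zero these scalar characters must be trivial, so it suffices to analyze the non-degenerate bilinear form on $G_i/R_i\times G_j/R_j$.

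The non-degenerate quotient is a symplectic $\BF_p$-vector space of some rank $2m\ge 2$, and the finite Stone--von Neumann theorem produces a unique irreducible representation of dimension $p^m$, realized as an $m$-fold tensor product of the three-dimensional Heisenberg model on $\C^p$. In that model $G_i/R_i$ acts by cyclic shifts and $G_j/R_j$ by dual phase characters; the unit invariant vectors in each tensor factor are $p^{-1/2}\sum_{a\in\BF_p}e_a$ and $e_0$, and their inner product is a Gauss sum of modulus exactly $p^{-1/2}$. Taking the $m$-fold tensor product, the cosine of the angle between the (one-dimensional) $G_i$- and $G_j$-invariant lines in $V$ equals $p^{-m/2}\le p^{-1/2}$. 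Combined with the trivial case this yields $\sphericalangle(V^{\pi(G_i)},V^{\pi(G_j)})\ge\arccos(1/\sqrt{p})$ for every irreducible $\pi$ and every pair $(i,j)$. Since $p\ge 5$ gives $1/\sqrt{p}\le 1/\sqrt{5}<1/2=\cos(\pi/3)$, the choice $\delta:=\arccos(1/\sqrt{5})-\pi/3>0$ works.

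The main obstacle is the pair $(G_1,G_3)$, where $G_1$ and $G_3$ both have rank greater than one and the radicals $R_1,R_3$ depend non-trivially on $\chi$; bookkeeping these radicals, verifying they act as scalars on $V$, and reducing cleanly to the non-degenerate symplectic quotient is the most technical step of the argument. The hypothesis $p\ge 5$ is sharp for this approach, since $1/\sqrt{p}\ge 1/2$ when $p\in\{2,3\}$ would force the bound to fall below $\cos(\pi/3)$.
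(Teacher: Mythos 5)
Your proposal is correct and follows essentially the same route as the paper: the paper itself delegates the proof to Section~4.1 of \cite{EJ} and illustrates it with exactly this Heisenberg/Stone--von Neumann computation in the case $n=3$, where the cosine of the relevant angle comes out to $1/\sqrt{p}$. What you add is the bookkeeping for general $n$ and all three pairs (central character, radicals acting by scalars, reduction to the non-degenerate symplectic quotient), landing on the uniform bound $p^{-m/2}\le p^{-1/2}<1/2$ for $p\ge 5$, which is precisely the intended argument.
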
 
 (see Section 4.1 in \cite {EJ})
The ideas behind the proof are illustrated in the next example:
\begin {exam}
Assume $R=\BF_p \langle   t_1,\ldots,t_k\rangle $ and that $n=3$. The subgroup $\langle   G_1,G_2\rangle$ is isomorphic to the (order $p^3$) Heisenberg group $H_p$ over
$\BF_p$. The Heisenberg group, $H_p$ is generated by
\[ x= \left( \begin{array} {ccc}
1 &  1 & 0 \\
0 & 1 & 0 \\
0 & 0 & 1 \\
\end{array} \right) , y= \left( \begin{array} {ccc}
1 &  0 & 0 \\
0 & 1 & 1 \\
0 & 0 & 1 \\
\end{array} \right) , z= \left( \begin{array} {ccc}
1 &  0 & 1 \\
0 & 1 & 0 \\
0 & 0 & 1 \\
\end{array} \right)\]
with the obvious identifications $G_1\cong\ti G_1=\langle  x\rangle$ and $G_2\cong \ti G_2=\langle  y\rangle$.
Further note that $H_p$ can be decomposed as a semi direct product: $H_p= \ti G_1 \ltimes A$ where $A \cong \BF_p^2 $ is the abelian group generated by $z$ and $y$ and $\ti{G_1}\cong \Bigg \{\begin {pmatrix} 1 & x \\ 0 & 1 \end {pmatrix} | x \in \BF_p \Bigg \}$(with this identification, the action of $\ti G_1$ on $A$ is via left matrix multiplication).  A complete description of the irreducible representations of this group is given in \cite {EJ}. Given a unitary irreducible representation of $H_p$ $(\pi,V)$,  its restriction to $A$ decomposes as a direct sum of characters  upon which $\ti G_1$ acts (identifying the dual of $A$ with itself the action is by inverse transpose multiplication). Given a character $\chi$ its orbit may have either $p$ elements or it is fixed by $\ti G_1$ (since $\ti G_1$ is of order $p$). In the former case one obtains a $p$ dimensional space. In the latter case, the center of $H_p$ (which is the group generated by $z$) is acting trivially. In this case $\pi(x)$ intertwines the action of $A$ hence by Schur's lemma the restriction of $\pi$ to $\ti G_1$ is a character.  The representation of $H_p$ is then a character factoring through the abelianzation of $H_p$, $H_p/{\langle z \rangle}$ (which is homomorphic to $\BF_p^2$). So far we found $p^2$ representations of dimension 1 and $p-1$ of dimension $p$. By counting we observe that we found all. Let us describe the latter more detailed: let $e_1,\ldots,e_p$ be the natural basis of $\BC^p$ and let $\eta $ be a non-trivial $p$'th root of unity. Define

\[
 \pi(x)e_i=e_{i+1}    ~\text{(cyclic) and}~ \pi(y)e_i=\eta^{i-1}e_i.
\]
In this case the spaces of invariant vectors are: $H^{\ti G_1}=\BC(e_1+\ldots +e_p)$ and $H^{\ti G_2}=\BC e_1$ and $\cos (\sphericalangle (H^{\ti G_1},H^{\ti G_2} )) = {1\over \sqrt p}$. We conclude then that if $p \ge 5$ then $\sphericalangle{(G_1,G_2)}> \pi /3$. The proof for $\sphericalangle{(G_1,G_2)}$ and $\sphericalangle{(G_1,G_3)}$ follows the same line.
\end {exam}

\subsection {Ultra-Products}
Next we recall the construction of ultra-products of Hadamard manifolds. Limits of metric spaces can be a powerful tool. In our case we will refine the metric in a given manifold. We will assume that the group is acting fixed point freely and use this assumption in order to construct a sequence of marked manifolds that become more and more flat. By taking a limit we obtain a Hilbert space upon which the group is acting without a fixed point.

In general, a sequence of metric spaces does not necessarily has a convergence subsequence.  A nice way to overcome this problem is by passing to ultra-limits. A more complete description of ultra-limits of metric spaces can be found in chapter I.5 in \cite {BH}.

 Let $(X_n,x_n)$ be a sequence of marked Hadamard manifolds. Fix a non-principal ultra-filter $\mathcal {U}$ on $\BN$. The ultra-product of $(X_n,x_n)$ with respect to $\mathcal{U}$, denoted by $(X_n,x_n)_{\mathcal{U}}$ is the quotient:
\[ 
(X_n,x_n)_{\mathcal{U}}= \Big( \prod_n (X_n,x_n) \Big)_{\infty}/ \mathcal{N}
\]
where
\[
\Big( \prod_n (X_n,x_n) \Big)_{\infty}=\{(y_n) | y_ n \in X_n,\: \sup_{ n} \: d(x_n,y_n)<\infty\}
\]
and $\mathcal {N}$ is an equivalence relation identifying sequences of zero distance:
\[
\mathcal {N} = \{ (y,z) \in \Big (\prod_n (X_n,x_n) \Big)^2_{\infty} | \lim_{U}{d(y_n,z_n)}=0\}
\]
 Suppose that $\alpha_n:G \rightarrow \Isom(X_n)$ are group actions on $X_n$. If for every group element $g \in G$, and every $y=(y)_n \in (X_n,x_n)_{\mathcal{U}}$, the sequence $d(\alpha_n (g)y_n,x_n)$ is bounded (actually it is enough to assume this for $\ga_n(g)x_n$), the following formula is well defined and produces an isometric action on the limit space.

\begin {equation} \label {action on product}
\alpha(g)(y)= (\alpha_n(g)y_n)
\end {equation}

\begin {exam} \label {limit of manifolds}
\begin {enumerate} [I.]
\item  An ultra-limit of geodesically complete spaces is also geodesically complete. An ultra-limit of complete spaces is also complete (see \cite {BH}.)
\item Ultra-limit of CAT(0) spaces is also CAT(0) space. Indeed CAT(0) spaces are characterized by the property that for every triple of points $x,y,z$ the following inequality holds:
\[
d^2(x,m(y,z)) \leq \frac{1}{2}d^2(x,y)+\frac{1}{2}d^2(x,z)-\frac{1}{4}d^2(y,z)
\]
(where $m(y,z)$ is the midpoint between $y$ and $z$.)
Note that in  inner product spaces this is an equality. Moreover, complete geodesically complete, spaces for which this is equality are Hilbert spaces. This motivates the following example. 
\item Suppose that $X$ is an infinite dimensional Hadamard manifold whose sectional curvature is bounded from below, and that $\{x_n\}$ is any sequence in $X$. Suppose further that $\{\gl_n\}$ is a sequence of real numbers with $\lim_{n \rightarrow \infty} {\gl_n}=\infty$ then the ultralimt of $(\gl_nX,x_n)$ is a Hilbert space (where $\gl_n X$ is the space $X$ whose metric $d$ is multiplied by $\gl_n$.) 
\end {enumerate}
 
\end{exam}

\section {subspace arrangements and fixed point property }
We now begin with some useful facts to be used in the proof of Theorem \ref {infinite fix point criterion}. Throughout this section we assume that $G$ is a group generated by finite groups, $G=\langle G_1,G_2,G_3\rangle$. We further assume that any pair $G_i,G_j$ generates a finite group. The main idea is to find "fat" triangles whose vertices are fixed by the action restricted to $G_{i,j}=\langle G_i,G_j \rangle$. By assumption $ G_{i,j}$ are finite. Hence by Cartan's theorem (see II.2.7 in \cite {BH}) they have fixed points.

 Suppose that $H$ is a finite group acting on a Hadamard manifold $X$.  We denote by $X^H$ the set of $H$ fixed points in $X$. Note that when $X$ is a Riemmanian manifold $X^H$ is a closed submanifold.
 When $X$ is a Riemmanian manifold and $x \in X$ is fixed by a group $H$ we can treat the action of $H$ as an orthogonal representation on the tangent space $T_x$. We wish to understand triangles whose vertices lie in $X^{G_{i,j}}$. We will do this in several steps. Recall that if $\{X_i\}_{i \in I}$ is a family of complete CAT(0) spaces, their product $\prod_{i\in I} X_i$ with the $L_2$ metric is also a complete CAT(0) space . Let $T$ be the space of triangles:
 $$ T=X^{G_{1,2}} \times X^{G_{1,3}} \times X^{G_{2,3}},$$
 and define
$$ f:T \rightarrow  \BR^+, ~ (x,y,z) \mapsto d^2(x,y)+d^2(z,y)+d^2(z,x)$$
for $x \in X^{G_{1,2}},~ y \in X^{G_{1,3}},~ z \in X^{G_{2,3}}$.
\begin {rem}
One can define also $f^1$ as $f^1(x,y,z)=d(x,y)+d(z,y)+d(z,x)$. Note that $f=0$ iff $f^1 =0$ and also $\inf{f}=0$ iff $\inf {f^1}=0$. The advantage of defining $f$ the way we did is that if we have $f \rightarrow \infty$ then $f$ has unique minimum while $f^1 $ has a minimum which is not necessarily unique. On the other hand calculations with $f^1$ are often easier.
\end {rem}

We claim that a minimal triangle is "fat" i.e. the sum of its angles is greater than $\pi$. This will play a significant role in the proof of Theorem \ref {infinite fix point criterion} as the sum of angles in a triangle in CAT(0) space can't be greater than $\pi$. This should follow from our assumption on the angles between invariant subspaces in orthogonal representations. Indeed since we have fixed points, the restrictions of the action to the finite subgroups are orthogonal representations. This suggests that the angles between invariant submanifolds should also have sum which is greater than $\pi$. The problem is that our definition of angles "mods out" the intersection between the invariant subspaces. A geodesic path connecting say the vertex $x$  to $y$ however, does not necessarily have derivatives perpendicular to $T_x^{\langle G_1,G_2\rangle}$. The next claim deals with this matter:
\begin {clm} \label {angles in minimal triangle}
Let $x\in X^{\langle G_i,G_j\rangle}$ be a vertex in a minimal triangle as above, and let $c_1(t)\subset X^{G_i},c_2(t)\subset X^{G_j}$ be the geodesic paths issuing from $x$ to $y$ and $z$ respectively then $\sphericalangle (c'_1(0),c'_2(0))\geq \sphericalangle(T_x^{G_i},T_x^{G_j})$
\end {clm}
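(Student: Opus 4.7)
The plan is to combine the fact that fixed-point sets of finite isometry groups are totally geodesic with the first-order stationary condition for $f$ at the minimum $(x,y,z)$. Since $G_i$ is a finite group of isometries of a Hadamard manifold, the fixed-point set $X^{G_i}$ is a totally geodesic submanifold with tangent space $T_xX^{G_i}=T_x^{G_i}$, and $T_x^{G_i}\cap T_x^{G_j}=T_x^{G_{i,j}}$. Because $x$ and $y$ both lie in $X^{G_i}$, the unique geodesic segment $c_1$ between them stays in $X^{G_i}$, whence $c_1'(0)\in T_x^{G_i}$; analogously $c_2'(0)\in T_x^{G_j}$.

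Next I would decompose orthogonally in $T_xX$,
$$c_1'(0)=a_1+b_1,\qquad c_2'(0)=a_2+b_2,$$
with $a_\ell$ the orthogonal projection of $c_\ell'(0)$ onto $T_x^{G_{i,j}}$ and $b_\ell\in T_x^{G_\ell}\cap(T_x^{G_{i,j}})^{\perp}$; thus $(b_1,b_2)$ is an admissible pair for Definition~\ref{defn: angles between subspaces}. The essential input is a first-variation computation: varying only $x$ along $X^{G_{i,j}}$ while holding $y,z$ fixed, and using the standard identity $\nabla_x\tfrac12 d^2(\cdot,p)=-\log_xp=-d(x,p)c'(0)$ on a Hadamard manifold, stationarity of $f$ at $(x,y,z)$ forces
$$\bigl\langle d(x,y)c_1'(0)+d(x,z)c_2'(0),\,v\bigr\rangle=0\quad\text{for every }v\in T_x^{G_{i,j}}.$$
Projecting onto $T_x^{G_{i,j}}$ this reads $d(x,y)a_1+d(x,z)a_2=0$, so $a_1$ and $a_2$ are anti-parallel and in particular $\langle a_1,a_2\rangle\le 0$.

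The desired estimate then follows by a short calculation. Expanding,
$$\cos\sphericalangle(c_1'(0),c_2'(0))=\langle a_1,a_2\rangle+\langle b_1,b_2\rangle\le\langle b_1,b_2\rangle.$$
If $\langle b_1,b_2\rangle\le 0$ the right-hand side is nonpositive, so $\sphericalangle(c_1'(0),c_2'(0))\ge\pi/2\ge\sphericalangle(T_x^{G_i},T_x^{G_j})$; here one uses that the angle between subspaces always lies in $[0,\pi/2]$, since the infimum in Definition~\ref{defn: angles between subspaces} is taken over admissible pairs and $b_\ell\mapsto-b_\ell$ remains in $T_x^{G_\ell}\cap(T_x^{G_{i,j}})^{\perp}$. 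If instead $\langle b_1,b_2\rangle>0$, then, using $|b_\ell|\le|c_\ell'(0)|=1$,
$$\cos\sphericalangle(c_1'(0),c_2'(0))\le\langle b_1,b_2\rangle\le\frac{\langle b_1,b_2\rangle}{|b_1|\,|b_2|}\le\cos\sphericalangle(T_x^{G_i},T_x^{G_j}),$$
the last step because $(b_1,b_2)$ is admissible in the supremum formulation of the angle.

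The main subtlety, and the whole reason the claim requires a proof rather than a one-line appeal to the definition, is that $c_\ell'(0)$ may have a nonzero component inside $T_x^{G_{i,j}}$, so one cannot simply quote $\sphericalangle(T_x^{G_i},T_x^{G_j})$ applied to $(c_1'(0),c_2'(0))$. First-variation of $f$ at the minimum pins those $T_x^{G_{i,j}}$-components to be anti-parallel, and therefore they contribute nonpositively to $\langle c_1'(0),c_2'(0)\rangle$; this is precisely what allows the angle inequality to survive the presence of the $a_\ell$-parts.
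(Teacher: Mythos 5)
Your argument is correct, and it reaches the same structural insight as the paper --- decompose $c_1'(0)$ and $c_2'(0)$ into their components $a_\ell$ inside $T_x^{G_{i,j}}$ and $b_\ell$ orthogonal to it, and exploit the freedom to move the vertex $x$ inside the totally geodesic submanifold $X^{G_{i,j}}$ --- but the execution is genuinely different. The paper argues by contradiction and never writes down a stationarity equation: it shows that if the angle inequality failed then $\langle a_1,a_2\rangle>0$ would force the geodesic $\exp_x(tW_0)\subset X^{G_{i,j}}$ to make acute angles with both sides, and then a synthetic CAT(0) comparison-triangle argument produces a nearby point $p\in X^{G_{i,j}}$ with $f(p,y,z)<f(x,y,z)$. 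You instead extract the exact Euler--Lagrange condition from $\nabla_x\tfrac12 d^2(\cdot,p)=-\log_x p$, namely $d(x,y)a_1+d(x,z)a_2=0$, which gives $\langle a_1,a_2\rangle\le 0$ directly, and then finish with a two-case linear-algebra estimate. Your route is more quantitative (you learn that the $T_x^{G_{i,j}}$-components are exactly anti-parallel, not merely non-positively correlated) and cleaner to check, at the cost of invoking smoothness of the squared distance and the first-variation formula, which the paper's purely comparison-geometric argument avoids; in the Hadamard manifold setting both are legitimate. Two small points in your favor: you correctly note that $(b_1,b_2)$ is admissible for Definition~\ref{defn: angles between subspaces} and that $\sphericalangle(T_x^{G_i},T_x^{G_j})\le\pi/2$ (via $b_2\mapsto -b_2$), a fact the paper's step ``$\sphericalangle(V,W)<\sphericalangle(T_x^{G_i},T_x^{G_j})$ implies $\langle V_0,W_0\rangle>0$'' also needs but leaves implicit. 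The only residual caveat, common to both proofs, is the degenerate situation where one of $b_1,b_2$ vanishes identically as a possibility for all admissible pairs (i.e.\ $T_x^{G_i}$ or $T_x^{G_j}$ equals the intersection), where the infimum defining the angle is over an empty set and a convention is needed; this does not affect the application, since there one only uses the resulting lower bound on the triangle's angles.
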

\begin {proof}
 Suppose that $\sphericalangle (c'_1(0),c'_2(0)) < \sphericalangle(T_x^{G_i},T_x^{G_j})$. For convenience denote $V=c'_1(0)$ and $W=c'_2(0)$, also write $V=V_0+V^\perp$ where $V_0=P_{i,j} V$ is the orthogonal projection of $V$ on   $T_x^{\langle G_i,G_j\rangle}$  (use the same notation for $W$). We will show that for some $x' \in X^{\langle G_i,G_j\rangle}$ we get $f(x',y,z)<f(x,y,z)$. Since $\sphericalangle (V,W) < \sphericalangle(T_x^{G_i},T_x^{G_j})$ we have that $\langle V_0, W_0 \rangle _{T_x} >0$. This means that the angle between $V$ and $W_0$ in $T_x$ is acute. Now denote by $w=w(t)$ the exponent of $W_0$ in $X^{\langle G_i,G_j\rangle}$. Since $\sphericalangle (w(t),c_1)<\pi/2$ and $\sphericalangle (w(t),c_2)<\pi/2$ we have that for some (every) $t_0$ small enough there exist $y' \in c_1$ and $z' \in c_2$ for which in the comparison triangles $\bar{\Delta} (\bar{x},\overline{w(t_0)},\overline {y'})$, and $\bar{\Delta} (\bar{x},\overline{w(t_0)},\overline {z'})$, the angles at $\bar x$ will also be  smaller than $\pi/2$. This together with the CAT(0) inequality, would imply that for some (any close enough) point $p \in [x,w(t_0)]$ we would have $d(p,z')<d(x,z')$ and also $d(p,y')<d(x,y')$. By the triangle inequality, $d(p,z)\leq d(p,z')+d(z',z).$ We get then that $d(p,z)<d(x,z')+d(z',z)=d(x,z)$  and similarly, also $d(p,y)<d(x,y)$ hence also $f(p,y,z)<f(x,y,z)$. 
\end{proof}
\begin {rem}
One can use the claim above and prove a finite dimensional version of \ref {infinite fix point criterion}. More precisely, denote by $ \partial X$ the visual boundary of $X$. We write then $\ti X =X \cup \partial X$. When $X$ is a finite dimensional manifold, $\partial X$ can be thought of as a sphere at infinity and $\ti X$ is then a compact space. Any isometric action on $X$ can be extended to an action on $\ti X$ (for all this see section II.8 in \cite{BH}). We claim that whenever such a group acts on a finite dimensional Hadamard manifold $X$, fixed point freely, it must fix a  point at infinity (i.e. it must have a fixed point at $\partial X$). Indeed the function $f$ defined above is convex hence if $f \rightarrow \infty$ as $x \rightarrow \infty$ , it has a minimum (see for example \cite {GKM}). That minimum is by the claim above a fixed point.\\
 On the other hand if $f$ does not tend to infinity as $x$ does, then by compactness of  $\tilde X$ it has a fixed point at infinity
\end {rem}
Claim \ref {angles in minimal triangle} is the the first step in proving  Theorem \ref {infinite fix point criterion}. Note that existence of fixed point is equivalent to having $\min_{T}{f} = 0$. We will show next that $\inf_{T}{f} =0$. The proof of this requires some preparation which will be done in the next two claims and the following remark.
%
\begin {clm}\label{switching vertices}
Suppose that $f(x,y,z) > L>0$. Suppose further that $d(x,y)< \sqrt{L/10}$, then the angle at $z$ is smaller than $\pi/3$.
\end{clm}

\begin {proof}
Indeed if $d(x,y)<\sqrt{ L/10}$ then either $d(x,z)> 3\sqrt{L/10}$, or $d(y,z)> 3\sqrt{L/10}$. Denote by $r$,  the diameter of the circumcircle of the comparison triangle at $\BE^2$. Then $r> d(y,z)> 3\sqrt{L/10}$. Then by the law of sines we get that  the angle at $z$ is smaller than $\pi/3$.
\end {proof}
%
\begin {clm} \label{minimizing direction}
 Suppose that for every $i \ne j$, $\sphericalangle (G_i,G_j) > \pi/3 $. There exist  $\alpha < \pi /2$ such that if $(x,y,z) \in T$ is a triangle whose angle at say $x$ is smaller than $\pi /3$, then there exist $W_0 \in  T_x^{G_{i,j}}$ with $\sphericalangle ([x,y],\exp(W_0)) < \alpha$ and $\sphericalangle ([x,z],\exp(W_0))<\alpha$.
\end {clm}
\begin {proof}
Let $c_1(t)\subset X^{G_i},c_2(t)\subset X^{G_j}$ be the geodesic paths issuing form $x$ to $y,z$ respectively. As above, also denote $V=c'_1(0)$ and $W=c'_2(0)$, and write $V=V_0+V_1$ where $V_0=P_{i,j} V$ is the orthogonal projection of $V$ on   $T_x^{\langle G_i,G_j\rangle}$  (use the same notation for $W$)

 The assumption $\sphericalangle (V,W) <\pi/3$ reads:
  $$\frac {\langle V,W \rangle} {\parallel V\parallel \parallel W\parallel } =\frac {\langle V_0,W_0 \rangle} {\parallel V\parallel \parallel W\parallel } +\frac {\langle V_1,W_1 \rangle} {\parallel V\parallel \parallel W\parallel }> \frac{1}{2}$$
By our assumption on the angles between $G_i$ and $G_j$, we have $\delta >0$ for which:
 $$ \frac {\langle V_1,W_1 \rangle} {\parallel V\parallel \parallel W\parallel }<\frac {\langle V_1,W_1 \rangle} {\parallel V_1\parallel \parallel W_1\parallel } < \frac {1}{2} - \delta ,$$
 hence
  $$\frac {\langle V_0,W_0 \rangle} {\parallel V\parallel \parallel W\parallel } > \delta$$
   which by Cauchy Schwartz inequality implies:
 $$\parallel V_0\parallel \parallel W_0\parallel >\delta \parallel V\parallel \parallel W\parallel .$$
Denote $w=w(t)=\exp(W_0) \subset X^{G_{i,j}}$.
 It follows then that the angle between $[x,y]$ and $w$ as well as the angle between $[x,z]$ and $w$ are bounded from above by $\alpha <\pi/2$. Indeed:
$$\frac {\langle W_0,W\rangle} {\parallel W_0\parallel \parallel W\parallel }=\frac {\langle W_0,W_0\rangle} {\parallel W_0\parallel \parallel W\parallel } = \frac {\parallel W_0\parallel }{\parallel W\parallel }>\delta$$
and also
$$ \frac {\langle W_0,V\rangle} {\parallel W_0\parallel \parallel V\parallel }> \delta.$$
\end {proof}
\begin {rem}\label{nearest point projection}
 Recall that if $C$ is a closed convex set in a CAT(0) space and $y \notin C$ is any point not in $C$, then there is a unique point denoted by $\pi_C(y)\in C$ such that $d(y,\pi_C(y))=d(y,C)$. The point $\pi_C (y)$ is called the nearest point projection of $y$ in $C$. Recall further that if $x\in C$ and $x \neq \pi_C(y)$, then the angle at $\pi_C(y)$ between $[\pi_C(y),x]$ and $[\pi_C(y),y]$ is $\geq \pi/2$, hence when $C$ is a geodesic line the angle at $\pi_C(y)$ is equal to $\pi/2$. It follows from convexity that any point on the segment $[x,\pi (y)]$ is closer to $y$ than $x$. For all this see Proposition II.2.4 in \cite{BH}.

\end{rem}
\begin {lem} \label {inf triangle}
Suppose that for every $i\ne j$, $\sphericalangle (G_i,G_j) >\pi/3 $. Then $\inf _{T}{f} =0$.
\end {lem}

\begin {proof}
Assume towards contradiction that $\inf _{T}{f} =L>0$. Fix $\epsilon>0$ and suppose that the triangle $(x,y,z) \in T$ has $f(x,y,z)<L+ \epsilon$. As the sum of angles in triangles in CAT(0) spaces cannot be greater than $\pi$, one of the angles, say at $x$ must be smaller than $\pi/3$. Let $w \subset X^{G_{1,2}}$ be a geodesic ray issuing from $x$ with $\sphericalangle ([x,y],w) < \alpha$ and $\sphericalangle ([x,z],w)<\alpha$ (with $\alpha < \pi/2$, see Claim \ref {minimizing direction}). Denote by $x'$ and $x''$ the nearest point projections of $y$  and $z$ on $w$ respectively (see Remark \ref{nearest point projection}). As $\sphericalangle ([x,y],w) < \alpha< \pi/2$, it follows from the same remark that $x' \ne x$. Similarly $x'' \ne x$. By the same reasoning (see the second part of Remark \ref{nearest point projection}) every point $s \in [x,x']$ has $d(s,y)<d(x,y)$ and similarly every $s \in [x'',x]$ has $d(x'',z)<d(x,z)$. Without loss of generality assume that $d(x',x)\le d(x'',x)$, so $d(x',y)<d(x,y)$ and $d(x',z)<d(x,z)$. We conclude then that  $L \le f(x',y,z)<f(x,y,z)$, hence $f(x,y,z)-f(x',y,z)<\epsilon$. It follows then that 
\[
d^2(x,y)-d^2(x',y)<\epsilon.
\]
On the other hand we have for the Euclidean comparison triangle $\overline {\Delta}(\overline{x},\overline {x'},\overline {y}) \subset \BE^2$, that the angle at $x'$ is $>\pi/2$. Thus 
\[
d^2(x',x) \leq d^2(x,y)-d^2(x',y).
\]
Combining the two we see that $d^2(x,x') \leq \epsilon$.
Note that we can assume that
 \[
d(x,y)>\sqrt{{L/10}}
\]
since otherwise by Claim \ref{switching vertices} the angle at $z$ is also smaller than $\pi/3$ and we could argue there.
To conclude, as $\epsilon$ tends to $0$ we observe triangles $\{x,x',y\}$ with the following properties:
\begin{enumerate}
\item $d(x,x') \leq \sqrt{ \epsilon}$.
\item $\sphericalangle([x,y],[x,x'] )<\alpha$ with $\alpha $ independent of $\epsilon$.
\item $\sphericalangle([x',y],[x',x] )=\pi/2$
\item $d(x',y) > L/10$
\end{enumerate}
This is impossible as the sectional curvature is bounded from below hence we get a contradiction.
\end {proof}

Our goal now is to prove Theorem \ref {infinite fix point criterion}. So far we showed that  $\inf _{T}{f} =0$. We are left to show that $f$ attains a minimum. To this end we define an auxiliary function $h$ as follows:

 \begin{defn}
Let $x \in X$ be any point and $C$ be a closed convex subset of $X$. As in Remark \ref {nearest point projection}, we denote by $\pi_C(x)$ the nearest point projection of $x$ in $C$. Now define:
 $$ h:X \rightarrow \BR, ~ x\mapsto d(x,\pi_{X^{\langle G_1,G_2\rangle}}(x))^2+d(x,\pi_{X^{\langle G_1,G_3\rangle}}(x))^2+d(x,\pi_{X^{\langle G_3,G_2\rangle}}(x))^2$$
\end{defn}
 One can easily observe that both $f$ and $h$ have zero infimum together namely:
 \begin {clm} \label {infimum of f}
 $\inf_{x \in X}{h(x)}=0$ iff $\inf_{\gD \in T}{f(\gD)}=0$.
 \end {clm}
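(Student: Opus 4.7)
The plan is to show $f$ and $h$ vanish in infimum simultaneously via a direct two-sided comparison, using only the triangle inequality and the elementary estimate $(a+b)^2 \le 2(a^2+b^2)$. Note that each $X^{G_{i,j}}$ is a closed convex (totally geodesic) submanifold of the Hadamard manifold $X$ (it is the fixed-point set of a finite group of isometries), so the nearest-point projections $\pi_{X^{G_{i,j}}}$ are well-defined and $h$ is a meaningful function on $X$.

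For the implication $\inf_{\Delta \in T} f(\Delta) = 0 \Rightarrow \inf_{x \in X} h(x) = 0$, I would take a sequence $(x_n, y_n, z_n) \in T$ with $f(x_n, y_n, z_n) \to 0$ and simply evaluate $h$ at the first coordinate $x_n$. Since $x_n \in X^{G_{1,2}}$, its distance to that submanifold is zero; and since $y_n \in X^{G_{1,3}}$ and $z_n \in X^{G_{2,3}}$, the other two projection distances are bounded by $d(x_n, y_n)$ and $d(x_n, z_n)$ respectively. Hence $h(x_n) \le d(x_n, y_n)^2 + d(x_n, z_n)^2 \le f(x_n, y_n, z_n) \to 0$.

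For the converse, given $x_n \in X$ with $h(x_n) \to 0$, I would form the triple $\Delta_n = (p_n, q_n, r_n) \in T$ consisting of the three projections of $x_n$ onto $X^{G_{1,2}}, X^{G_{1,3}}, X^{G_{2,3}}$. Applying the triangle inequality through the pivot $x_n$ together with $(a+b)^2 \le 2(a^2+b^2)$ yields, e.g., $d(p_n, q_n)^2 \le 2 d(x_n, p_n)^2 + 2 d(x_n, q_n)^2$, and summing the three analogous inequalities (each projection-distance-squared appears in exactly two of them) gives $f(\Delta_n) \le 4\, h(x_n) \to 0$. I do not anticipate any real obstacle: the claim is a soft equivalence between two ways of measuring how simultaneously close a point can be to the three fixed-point sets, and needs no curvature or representation-theoretic input beyond the existence of projections onto closed convex subsets of a CAT(0) space.
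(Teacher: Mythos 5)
Your proof is correct, and both directions go through exactly as you describe: $h(x_n)\le f(x_n,y_n,z_n)$ by evaluating $h$ at a vertex, and $f(\Delta_n)\le 4\,h(x_n)$ by the triangle inequality through $x_n$ together with $(a+b)^2\le 2(a^2+b^2)$. The direction from $\inf h=0$ to $\inf f=0$ is the same (one-line) triangle-inequality argument the paper gestures at. For the other direction, however, the paper's sketch proposes a slightly different device: given a triangle with small $f$, it takes the \emph{circumcenter} of that triangle and uses the CAT(0) inequality to see that this center is close to all three vertices, hence has small $h$. Your choice of simply using the vertex $x_n\in X^{G_{1,2}}$ as the test point is more elementary --- it needs no CAT(0) comparison at all, only that nearest-point projections do not increase distance to a set containing a given point --- and it even yields the cleaner constant $h(x_n)\le f(\Delta_n)$. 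The circumcenter version would matter if one wanted a canonical point associated to the triangle (as in the barycenter arguments elsewhere in the paper), but for this purely qualitative equivalence your route is simpler and loses nothing. One could quibble that you should note the fixed-point sets $X^{G_{i,j}}$ are nonempty (by Cartan's theorem, since the $G_{i,j}$ are finite) so that $T$ is nonempty and the projections exist, but you do record that they are closed, convex, totally geodesic submanifolds, which is the substantive point.
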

 \begin {proof}
 Indeed for the if part consider the circumcenter of small triangle and apply the CAT(0) inequality. The only if part  follows from the triangle inequality .
 \end {proof}
The following is immediate:
\begin{cor} \label {infimum of h}
Suppose that for every $i\ne j$, $\sphericalangle (G_i,G_j) >\pi/3 $. Then $\inf_{x \in X}{h(x)}=0$.
\end{cor}
Let $K <G$ be a compact (i.e. finite) symmetric generating set of $G$ and $x$ any point in $X$. We define the diameter of $x$, $diam (K\cdot x)$:
\begin {defn}
$diam (K\cdot x)=\max_{k\in K}d(x,k\cdot x)$
\end {defn}
The main step in showing that $f$ attains minimum is to construct a limit space upon which $G$ acts fixed point freely. In order to ensure absence of a fixed point we will need to bound the diameter of points which are close to our base points. The next easy claim will help us in this task. It will enable us to replace "bad" points with "good" ones.
\begin{clm}\label{connection between h and diam}
Let $x\in X$ be any point in $X$. Let $K_1=\langle G_2,G_3 \rangle$, $K_2=\langle G_1,G_3 \rangle$, $K_3=\langle G_1,G_2 \rangle$, and $K=K_1 \cup K_2 \cup K_3$. then
 $$\frac{1}{4}diam (K\cdot x)^2\leq h(x) \leq 3diam(K\cdot x)^2.$$
\end{clm}

\begin {proof}
Suppose that $diam(K\cdot x)=d(x,gx)$. Without loss of generality we can assume that $g \in \langle G_1,G_2 \rangle =K_3$. 
Then by triangle inequality
$$d(x,\pi_{X^{\langle G_1,G_2\rangle}}(x)) +d(gx,\pi_{X^{\langle G_1,G_2\rangle}}(x)) \geq d(x,gx)$$
The action is by isometries and $\pi_{X^{\langle G_1,G_2\rangle}}(x)$ is fixed by $g$ hence this reads:

$$d(x,\pi_{X^{\langle G_1,G_2\rangle}}(x)) \geq \frac{1}{2} d(x,gx)=\frac {1}{2}diam(K\cdot x).$$
In particular
\begin {equation} 
h(x) \geq \frac{1}{4}diam(K\cdot x)^2 
\end {equation}
and the left hand side of inequality is proved.
\\For the second:
Let $c_i$ be the circumcenter of $conv(K_i \cdot x)$, i.e. $c_i$ is the unique point minimizing the radius of ball containing  the convex hull of $K_i \cdot x$. Then on the one hand (by definition of circumcenter)
\[
d(x,c_i)\leq diam(K_i \cdot x) \leq diam (K\cdot x)
\]
On the other hand $c_i$ is $K_i$ fixed hence
$$d(x,c_i)\geq d(x,\pi_{X^{K{_i}}}(x))$$	
hence
\[
h(x) \leq 3 diam(K\cdot x)^2 
\]
\end {proof}
We turn now to prove Theorem \ref {infinite fix point criterion}
\begin {proof} [Proof of Theorem \ref {infinite fix point criterion}]
Let $X$ be (possibly infinite dimensional) Hadamard manifold whose sectional curvature is bounded below by $\kappa$. Assume by contradiction, that $G$ is acting isometrically on $X$ without a fixed point. Let $h$ be defined as above. By by Corollary \ref {infimum of h}, $\inf_{x \in X}{h(x)}=0$ .


%
%
Assume then that $h(z_n)\leq \frac{1}{2^n}$ for some sequence $\{z_n\}_{n\in\BN} \in X$. Next we apply a limit process (compare with Lemma 3.1 in \cite {BFGM}). Continue along the following steps:

I. Our first step is to construct out of $\{z_n\}_{n\in\BN}$ another sequence, having diameter bounded from below for nearby points, yet having vanishing of $h$.
\begin {clm} \label {sparse sequence}
There exist a sequence $\{(x_n,k_n)\}_{n\in\BN}$ (where $x_n \in X$ and $k_n \in \BN$) with $h(x_n)\leq \frac {1}{2^{n+k_n}}$ and $diam (K\cdot y)\geq \frac {1}{5} diam (K\cdot x_n)$ for every $y \in B(x_n,\frac {1}{(n+k_n)^2})$.
\end {clm}

\begin {proof}
Fix $n$ and start with $z_n$. By the way we chose it, $h(z_n)<\frac {1}{2^n}$. If however it happens that $diam (K\cdot y)< \frac {1}{5} diam (K \cdot z_n)$ for some $y \in B(z_n, \frac{1}{n^2})$, then by Claim \ref{connection between h and diam},
\[
h(y)\leq 3diam(K \cdot y)^2 <\frac{3}{25}(K \cdot z_n)^2 <\frac{1}{2}h(z_n)
\]
so $h(y)<\frac {1}{2^{n+1}}$. Denote $y_1^n=y$. If again it happens that $diam (K\cdot y)\le \frac {1}{5} diam (K \cdot y_1^n)$ for some $y \in B(y_1^n, \frac{1}{(n+1)^2})$, then by Claim \ref{connection between h and diam} we have again that also $h(y)<\frac {1}{2}h(y_1^n)<\frac {1}{2^{n+2}}$. Continue with this process obtaining a sequence $y_k^n$ with $h(y_k^n)<\frac {1}{2^{n+k}}$. 
\begin {clm}
This process has to terminate after finitely many times with $y_{k_n}^n$ which we denote by $x_n$. 
\end {clm}
\begin {proof}
Indeed otherwise $\{y_k^n\}_{k=1}^\infty$ is Cauchy sequence since $d(y_k^n,y_{k+1}^n)<\frac {1}{(n+k)^2}$. Since $X$ is complete it has a limit which has to be a $G$ fixed point.

\end {proof}

By construction $x_n$ is the desired sequence.

\end {proof}
II. In the second step we construct a limit space. Let $X_n=\frac{1}{diam(K \cdot x_n)}X$ denote the Hadamard manifolds $X$ with new metric $d_n=\frac {1}{diam(K \cdot x_n)}d$. The pointed spaces $(X_n,x_n)$ have the following nice properties:
\begin {enumerate}
\item \label {a} The sectional curvature of $X_n$ is bounded from below by $\frac {\kappa}{ diam(K \cdot x_n)}$.
\item \label {aaa} The action of $G$ induces an isometric action on $X_n$. In order to distinguish between the diameter of a point in $X$ and the diameter in $X_n$ we denote $Diam_n(K \cdot x)=\max_{k \in K} d_n(x,k \cdot x)=\frac {diam(K \cdot x)}{diam(K \cdot x_n)}$. By definition $Diam_n(K\cdot x_n)=1$. Moreover for any sequence $y_n \in X_n$ for which $d_n(y_n,x_n)$ is bounded by some $L >0$, $Diam_n(K\cdot y_n) \le 2L+1$. This follows from the triangle inequality.
\item \label {bbb} On the other hand for every such $y_n$,  $Diam_n(K\cdot y_n) \ge \frac{1}{5}$ for every $n$ large enough. Indeed if $d_n(y_n,x_n)<L$ (for some $L>0$), then by definition of $d_n$, 
$$d(y_n,x_n)<diam(K \cdot x_n)L.$$
We have then, 
$$d(y_n,x_n)<2L\sqrt{h(x_n)}$$
 (by Claim \ref {connection between h and diam}). By choice of $x_n$, $d(y_n,x_n)<2L\sqrt  {\frac {1}{2^{n+k_n}}} $. Hence for large enough $n$, $y_n \in B(x_n,\frac {1}{(n+k_n)^2})$. By the choice of $x_n$ (see \ref {sparse sequence}) $diam(K \cdot y_n)\geq \frac{1}{5}diam(K \cdot x_n)$, hence $Diam_n(K\cdot y_n) \ge \frac{1}{5}$.

\end {enumerate}
Fix a non principal ultra-filter $\mathcal {U}$ and let $\mathcal H$ be the ultra-product of the pointed spaces $(X_n,x_n)$. Then $\mathcal H$ is a Hilbert space (see \ref {limit of manifolds}). Property (\ref{aaa}) allows us to use (\ref {action on product}) in order to define an isometric action on $\mathcal H$. This action is fixed point free by property (\ref {bbb}).

However it follows from Theorem 5.9 in \cite {EJ} as well as Theorem 1.2 in \cite {K} , that $G$ has property (T). By Delorme's Theorem $G$ then has also property FH (see for example Theorem 2.12.4 in \cite {BHV} and Theoreme V.1 in \cite {D} or the direct proof we gave \ref {FH}) hence we reached contradiction.



\end {proof}

The proof of Theorem \ref {infinite dimension} is now easy:
\begin {proof} [Proof of Theorem \ref {infinite dimension}.]
Theorem \ref {infinite dimension} follows from Theorem \ref {infinite fix point criterion} combined with  Lemma \ref {lem:>60}.
\end {proof}

We turn now to prove Theorem \ref {main result}. The proof relies on the well known fact that abelian groups that act on finite dimensional Hadamard manifolds without fixing any point must have element of infinite order. This fact follows from the fact that the fixed point set of any element is a complete Hadamard submanifold hence one can argue by induction.

\begin {proof} [Proof of Theorem \ref {main result}.]
Suppose that $\ga$ is an isometric $\gC$ action.    For fixed $1 \leq i \ne j \leq 3$, denote the abelian subgroup (isomorphic to the additive group of $R$),
  $$H_{i,j}= \{ x_{i,j}(r) ~s.t. ~ r \in R\} \cong R.$$
Then $H_{i,j}$ is an abelian group whose elements are of finite order, hence the restriction of $\ga$ to $H_{i,j}$ fixes a point $x \in X$. Suppose then that $x \in X$ is fixed by $H_{i,j}$. Since the action is by isometries, the image of a point $y$ is determined by the image of the geodesic segment $[x,y]$. The action is therefore fully determined by the images of geodesic rays issuing from $x$. Thus the study of isometric actions with fixed point is reduced to the study of the induced finite dimensional orthogonal representations on $T_x$. Let $\rho_{i,j}$ denote the finite dimensional orthogonal representation induced by the restriction of $\ga$ to $H_{i,j}$. Then  $\rho_{i,j}$ is a direct sum of one dimensional representations. Write:
  \[
   \rho_{i,j}   =\bigoplus_{k=1}^m \chi_k
  \]
 (with $\chi_k \in \hat{R}$ characters on $R$ and $m=dimX$).
  Observe that as $R$ is a direct sum of finite groups (namely copies $\BF_p$), its dual $\hat{R}$ is isomorphic to the product $\prod_{n \in \BN} \BF_p$.
    \begin {clm}
      Let $A_k=\ker \chi_k$. Then $A_k<H_{i,j}\cong R$ is subgroup of finite index.
     \end {clm}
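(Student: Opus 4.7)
The plan is to exploit the fact that $R = \BF_p\langle t_1,\ldots,t_k\rangle$ is an $\BF_p$-algebra, so its underlying additive group has exponent $p$. This is the essential input, and everything else is automatic. First I would observe that for every $r \in R$, one has $pr = 0$ in $R$, so by the Steinberg relation $x_{i,j}(r_1)x_{i,j}(r_2) = x_{i,j}(r_1+r_2)$, the element $x_{i,j}(r)$ has order dividing $p$ in $\gC$. Consequently, after identifying $H_{i,j}$ with the additive group of $R$, every homomorphism from $H_{i,j}$ into an abelian group factors through a group of exponent $p$.

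The decomposition $\rho_{i,j} = \bigoplus_{k=1}^m \chi_k$ is obtained (after passing to the complexification of $T_x$, which does not change the kernels) by simultaneously diagonalizing the commuting family $\{\rho_{i,j}(x_{i,j}(r))\}_{r \in R}$ of orthogonal transformations of finite order. Each eigenvalue of such an element is a $p$-th root of unity, so each $\chi_k$ is a character
\[
\chi_k \colon R \longrightarrow \mu_p \subset \BC^{\times},
\]
where $\mu_p$ is the cyclic group of $p$-th roots of unity. Since $|\mu_p| = p$, the image of $\chi_k$ is a subgroup of a group of order $p$, hence
\[
[R : A_k] = |\operatorname{im}(\chi_k)| \leq p,
\]
and in particular $A_k$ has finite index in $R$.

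There is no real obstacle here; the only thing to be careful about is the legitimacy of the diagonal decomposition, but this is standard because $H_{i,j}$ is an abelian group acting by commuting elements of finite order, so its complexified representation splits as a sum of characters. The substantive work will come in later steps, where one must use the finite-index subgroups $A_k$ (and their intersection, still of finite index) to reduce the action to one that factors through a finite quotient of $\gC$; this fits naturally with the description of $\hat R \cong \prod_{n \in \BN} \BF_p$ already noted in the excerpt.
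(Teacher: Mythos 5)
Your proof is correct and is essentially the paper's own argument: the paper's one-line proof likewise notes that the range of $\chi_k$ has (at most) $p$ elements because the additive group of $R=\BF_p\langle t_1,\ldots,t_k\rangle$ has exponent $p$, so the kernel has index dividing $p$. Your additional remarks on complexification and simultaneous diagonalization merely justify the decomposition $\rho_{i,j}=\bigoplus_k\chi_k$, which the paper takes as given before stating the claim.
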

    \begin {proof}
      Indeed as the range of $\chi_k$ has $p$ elements the kernel is of index $p$.
    \end {proof}
  \begin {cor} \label {the kernel is finite index ideal}
    Let $U_{i,j} =\{ r \in R | x_{i,j}(r) \in \ker \rho_{i,j}\} \cong  \bigcap_{k=1}^m A_k$. Further let $U=\bigcap_{1 \leq i \neq j \leq 3} U_{i,j}$ then $U$ is a finite index two sided ideal in  $R$.
  \end {cor}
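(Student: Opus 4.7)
The plan is to verify the three assertions (additive subgroup, finite index, two-sided ideal) in turn, with the last one being the real content that uses the Steinberg commutator relations.

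First, each $U_{i,j}$ is an additive subgroup of $R$ because the map $r \mapsto x_{i,j}(r)$ is an additive homomorphism from $R$ onto $H_{i,j}$ (by relation (1) in the Steinberg presentation), so $U_{i,j} = \{r : \rho_{i,j}(x_{i,j}(r)) = \mathrm{id}\}$ is the kernel of an additive homomorphism. Since each $A_k$ has index $p$ in $R$ and $m = \dim X < \infty$, each $U_{i,j} = \bigcap_{k=1}^m A_k$ has index at most $p^m$, and a finite intersection of finite index subgroups is again of finite index. Hence $U$ is an additive subgroup of finite index.

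The key step for the ideal property is the following sharpening of the definition of $U_{i,j}$: if $r \in U_{i,j}$, then $x_{i,j}(r)$ acts as the identity on all of $X$, not just has trivial differential at the chosen fixed point $x$. This is the standard rigidity fact that an isometry of a Riemannian manifold which fixes a point and has trivial differential there must be the identity (an element of the isotropy representation with trivial image fixes a neighborhood via the exponential map, and an isometry fixing an open set of a connected Riemannian manifold is the identity). In particular, the set $\{r \in R : x_{i,j}(r) \text{ acts trivially on } X\}$ equals $U_{i,j}$ and is independent of the auxiliary choice of base point.

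With this in hand, the ideal property follows from the Steinberg commutator relation $[x_{i,j}(r_1), x_{j,k}(r_2)] = x_{i,k}(r_1 r_2)$. Given $r \in U$ and $s \in R$, I want to show $rs, sr \in U$, i.e. that $rs, sr \in U_{i,k}$ for every pair $i \neq k$. Since $n \geq 3$, I can choose an index $j$ distinct from both $i$ and $k$. Then
\[
x_{i,k}(rs) = [x_{i,j}(r), x_{j,k}(s)].
\]
Because $r \in U \subset U_{i,j}$, the element $x_{i,j}(r)$ acts trivially on $X$ by the rigidity observation above, so the commutator acts trivially, hence $rs \in U_{i,k}$. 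The symmetric identity $x_{i,k}(sr) = [x_{i,j}(s), x_{j,k}(r)]$ uses $r \in U \subset U_{j,k}$ to conclude $sr \in U_{i,k}$.

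The only step that is not bookkeeping is the passage from "trivial differential at a fixed point" to "trivial on all of $X$", since without it $U_{i,j}$ depends on the auxiliary base point and the commutator argument cannot be combined across different pairs $(i,j)$; the availability of a third index $j \notin \{i,k\}$ (needing $n \geq 3$) is what makes the ideal argument go through.
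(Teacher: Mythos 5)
Your proof is correct and takes essentially the same route as the paper: additivity and finite index come from writing $U_{i,j}$ as the finite intersection $\bigcap_k A_k$ of kernels of characters, and the two-sided ideal property comes from the Steinberg relation $[x_{i,j}(r),x_{j,k}(s)]=x_{i,k}(rs)$ using a third index available because $n\geq 3$. The one point you spell out that the paper leaves implicit is the upgrade from ``trivial differential at the fixed point'' to ``acts trivially on all of $X$'' (the paper absorbs this into its earlier remark that an isometry of a Hadamard manifold is determined by its action on geodesics through a fixed point, i.e.\ by $\rho_{i,j}$); this is a genuine and necessary clarification, since it is what makes $U_{i,j}$ independent of the chosen base point and lets the commutator argument be combined across different pairs, but it does not change the argument's structure.
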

\begin {proof}
Note first that for every $i,j$, if $u \in U$ then by definition $x_{i,j}(u)$ acts trivially. Now as it's a finite intersection of finite index subgroups it is also finite index and it is closed under addition. Suppose further that $u \in U$, take any $r \in R$ by the defining relations of the Steinberg group we obtain that $[x_{i,j}(r),x_{j,k}(u)]=x_{i,k}(ru)$. As $u \in U$ acts trivially the left hand side is also in the kernel, hence $U$ is closed under left multiplication by elements in $R$. Observe that $[x_{i,j}(u),x_{j,k}(r)]=x_{i,k}(ur)$. It follows that $U$ is a two sided ideal.   

\end {proof}

Next we adopt Milnor's notation (used in \cite {M}). We denote by $St_n(U)$ the normal closure of the group generated by elements of the form $x_{i,j}(u)$, $u \in U$. This group is generated by elements of the form $sx_{i,j}(u)s^{-1}$ (with $s \in St_n(\BF_p \langle t_1,\ldots ,t_k\rangle)$. It follows that $St_n(U) \lhd St_n(R)$ is in the kernel of $\ga$. By Lemma 6.1 in \cite {M} we obtain a short exact sequence
$$1 \rightarrow St_n(U) \rightarrow St_n(R) \rightarrow St_n(R/U) \rightarrow 1.$$
 On the other hand, Kassabov and Sapir showed that when $R/U$ is finite, $St_n(R/U)$ is finite also (Lemma 17 in \cite {KS}). This proves that the kernel of $\ga$ is of finite index and finishes the proof.

\end {proof}

 \subsection {The Torsion Free Case}
We now turn to deal with the groups $EL_n(R)$ when $R= \BZ \langle   t_1,\ldots,t_k\rangle$. As above our results will be slightly more general since we work with $St_n(R)$ instead. Note that although (similarly to the case $R= \BF_p \langle   t_1,\ldots,t_k\rangle$)   these groups are still generated by groups of the form
\begin{itemize}
\item
$G_i= \{x_{i,i+1}(a)\}$, where $a \in \BZ$,  $1 \leq i \leq n-1$, and
\item $G_n=\{x_{n,1}(a_0+a_1t_1+\ldots a_kt_k)\}$ , where $a_i \in \BZ$,  $1 \leq i \leq k$.

\end{itemize}
 these groups are infinite hence our method would fail in the first step. Indeed these groups can act by  hyperbolic isometries without fixing any point at all. However much is known about co-compact proper actions of solvable groups on CAT(0) spaces. Our main tool in proving \ref {non  commutative universal lattices} will be the solvable subgroup theorem which we will describe next. We begin by reminding the definition of a metrically proper action.
\begin {defn} \label {proper action}
Let $G$ be a discrete group acting on a CAT(0) space $X$ by isometries. We say that the action is metrically proper if for every $x \in X$ there is $r >0$ such that the set $\{ g \in G ~ s.t. ~ g.B(x,r) \cap B(x,r) \neq \emptyset \}$ is finite.
\end {defn}
Note that this definition is in general more restrictive than the usual definition of proper actions, which regards to compact sets in $X$. Even though one can clearly see both definitions coincide in the case of proper spaces (see definition I.8.2 and the following remark in \cite {BH}).
The solvable subgroup theorem states that if a group $\Gamma$ acts metrically properly and  co-compactly on CAT(0) space then any solvable subgroup $S< \Gamma$  is finitely generated and more importantly, it is virtually abelian. It is straightforward to deduce from it that non-uniform irreducible lattices of higher rank  semi simple Lie groups that have no compact factors cannot act metrically properly and co-compactly on CAT(0) spaces (see theorem II.7.8 and the following remark in \cite {BH}. Thus Theorem \ref {non  commutative universal lattices} is a generalization of this. Theorem \ref {non  commutative universal lattices} will follow easily from the the solvable subgroup theorem combined with the following rigidity property:
\begin{prop}\label{prop:algebraic rigidity}
Let $H$ be a topological group all of whose solvable subgroups are virtually abelian and $\gC$ an universal lattice, then any group homomorphism $\phi : \gC \rightarrow H$ has finite image.
\end{prop}
\begin {proof}
Suppose that we have a group homomorphism: $\phi : \gC \rightarrow H$. Similarly to the case studied  above we denote $G_{i,j}= \langle x_{i,i+1}(R), x_{j,j+1}(R) \rangle$. We study the image of the solvable (infinite Heisenberg) group $G_{1,2}$. Observe that by simple calculation the derived subgroup $[G_{1,2},G_{1,2}]$ is just the subgroup $E_{1,3}(R)$ of matrices with $1$ on the diagonal, elements of $R$ in the $(1,3)$ position and $0$ elsewhere. By the assumption regarding to solvable subgroups in $H$, the image of $G_{1,2}$ is virtually abelian hence $\ker \phi \cap [G_{1,2},G_{1,2}] $ is of finite index in $[G_{1,2},G_{1,2}]=E_{1,3} \cong R$.

For fixed $1 \leq i \ne j \leq n$ denote $A_{i,j}= \ker \phi \cap x_{i,j}(R)$. Denote further,  $U_{i,j} =\{ r \in R | x_{i,j}(r) \in A_{i,j}\}$.  We proceed in a similar manner to the end of the proof of Theorem \ref {main result}. First we show that for any $i,j$ we have $U_{i,j}=U_{1,3}$. Indeed Observe that  if $r \in U_{1,3}$ then $[x_{1,3}(r),x_{3,k}(1)] \in A_{1,k}$. Since $[x_{1,3}(r),x_{3,k}(1)]=x_{1,k}(r)$ it follows that $r \in U_{1,k}$ for any $k \ne 1$. But then (in a similar manner)  $[x_{k,1}(1),x_{1,j}(r)]=x_{k,j}(r)$ implies that $r \in U_{k,j}$ for any $k,j \ne 1$. Finally $[x_{k,j}(r),x_{j,1}(1)]=x_{k,1}(r)$ gives that $r \in U_{j,1}$ and $U_{1,3} \subset U_{i,j}$ (one gets the opposite inclusion similarly). 
 
The subrings $U_{i,j}$ are therefore independent of $i,j$ so we denote them by $U$. Next we show that $U<R$ is a finite index two sided ideal. Indeed by definition it is a finite index (additive) subgroup in $R$. Moreover if $r \in U$ and $s$ is any element  in $R$ then $[x_{1,3}(r),x_{3,k}(s)]=x_{1,k}(rs) \in A_{1,k}$ (hence also in $A_{i,j}$ for any $i \ne j$). Therefore $rs \in U$, and $U$ is closed under right multiplication by elements of $R$. Similarly it is also a left ideal.

 We obtain again a short exact sequence
 $$1 \rightarrow St_n(U) \rightarrow St_n(R) \rightarrow St_n(R/U) \rightarrow 1$$ 
and again use the fact that $St_n(R/U)$ is finite when $R/U$ is finite to deduce that the image of $\phi$ is finite.
\end {proof}

\begin {proof}[Proof of Theorem \ref {non  commutative universal lattices}.]
The theorem is easily deduced from Proposition \ref {prop:algebraic rigidity} combined with the solvable group theorem described above.
\end {proof}

\bibliography{cat}{}

\begin{thebibliography}{10}

\bibitem{ABJLMS}
Goulnara Arzhantseva, Martin~R Bridson, Tadeusz Januszkiewicz, Ian~J Leary,
  Ashot Minasyan, and Jacek {\'S}wiatkowski.
\newblock Infinite groups with fixed point properties.
\newblock {\em Geometry \& Topology}, 13(3):1229--1263, 2009.

\bibitem{BFGM}
Uri Bader, Alex Furman, Tsachik Gelander, and Nicolas Monod.
\newblock Property (t) and rigidity for actions on banach spaces.
\newblock {\em Acta mathematica}, 198(1):57--105, 2007.

\bibitem{BHV}
Bachir Bekka, Pierre de~La~Harpe, and Alain Valette.
\newblock {\em Kazhdan's property (T)}, volume~11.
\newblock Cambridge university press, 2008.

\bibitem{BH}
Martin~R Bridson and Andr{\'e} Haefliger.
\newblock {\em Metric spaces of non-positive curvature}, volume 319.
\newblock Springer, 2009.

\bibitem{D}
Patrick Delorme.
\newblock 1-cohomologie des repr{\'e}sentations unitaires des groupes de lie
  semi-simples et r{\'e}solubles. produits tensoriels continus de
  repr{\'e}sentations.
\newblock {\em Bulletin de la Soci{\'e}t{\'e} Math{\'e}matique de France},
  105:281--336, 1977.

\bibitem{DJ}
Jan Dymara and Tadeusz Januszkiewicz.
\newblock Cohomology of buildings and of their automorphism groups.
\newblock {\em Inventiones mathematicae}, 150(3):579--627, 2002.

\bibitem{EJ}
Mikhail Ershov and Andrei Jaikin-Zapirain.
\newblock Property (t) for noncommutative universal lattices.
\newblock {\em Inventiones mathematicae}, 179(2):303--347, 2010.

\bibitem{EJK}
Mikhail Ershov, Andrei Jaikin-Zapirain, and Martin Kassabov.
\newblock Property (t) for groups graded by root systems.
\newblock {\em arXiv preprint arXiv:1102.0031}, 2011.

\bibitem{F}
Benson Farb.
\newblock Group actions and helly's theorem.
\newblock {\em Advances in Mathematics}, 222(5):1574--1588, 2009.

\bibitem{GKM}
Tsachik Gelander, Anders Karlsson, and Gregory~A Margulis.
\newblock Superrigidity, generalized harmonic maps and uniformly convex spaces.
\newblock {\em Geometric and Functional Analysis}, 17(5):1524--1550, 2008.

\bibitem{IN}
Hiroyasu Izeki, Takefumi Kondo, and Shin Nayatani.
\newblock Fixed-point property of random groups.
\newblock {\em Ann. Global Anal. Geom.}, 35(4):363--379, 2009.

\bibitem{K}
Martin Kassabov.
\newblock Subspace arrangements and property t.
\newblock {\em arXiv preprint arXiv:0911.1983}, 2009.

\bibitem{KS}
Martin Kassabov and Mark~V Sapir.
\newblock Nonlinearity of matrix groups.
\newblock {\em Journal of Topology and Analysis}, 1(03):251--260, 2009.

\bibitem{M}
John~Willard Milnor.
\newblock {\em Introduction to algebraic K-theory}.
\newblock Number~72. Princeton University Press, 1971.

\bibitem{Mim}
Masato Mimura.
\newblock Strong algebraization of fixed point properties.
\newblock {\em arXiv preprint arXiv:1505.06728}, 2015.

\bibitem{NS}
Assaf Naor and Lior Silberman.
\newblock Poincar{\'e} inequalities, embeddings, and wild groups.
\newblock {\em Compositio Mathematica}, 147(05):1546--1572, 2011.

\bibitem{P}
P.~Pansu.
\newblock Fixed points of actions of building groups on cat(0) spaces.
\newblock In {\em In workshop �Group theory: geometric and probabilistic
  methods�, Ein Gedi}, 2006.

\bibitem{W}
Mu-Tao Wang et~al.
\newblock A fixed point theorem of discrete group actions on riemannian
  manifolds.
\newblock {\em J. Differential Geom}, 50(2):249--267, 1998.

\end{thebibliography}
\bibliographystyle{plain}

\end{document}